\documentclass[12pt]{article}
\usepackage{fullpage}
\usepackage{amssymb}
\usepackage{amsmath}
\usepackage{amsthm}
\usepackage{stmaryrd}
\usepackage{xypic}
\usepackage{setspace}

\newtheorem{theorem}{Theorem}[section]

\newtheorem{proposition}[theorem]{Proposition}
\newtheorem{corollary}[theorem]{Corollary}

\newcommand{\tref}[1]{Theorem~\textup{\ref{thm:#1}}}
\newcommand{\pref}[1]{Proposition~\textup{\ref{prop:#1}}}
\newcommand{\cref}[1]{Corollary~\textup{\ref{cor:#1}}}
\newcommand{\eref}[1]{Equation~\textup{\ref{eq:#1}}}

\newcommand{\tx}[1]{\textrm{#1}}
\newcommand{\lcm}{\textrm{lcm}}
\newcommand{\gp}[2]{\langle #1 \mid #2 \rangle}
\newcommand{\s}{\sigma}
\newcommand{\G}{\Gamma}
\newcommand{\calK}{\mathcal K}
\newcommand{\calL}{\mathcal L}
\newcommand{\calP}{\mathcal P}
\newcommand{\calPd}{\mathcal P^{\delta}}
\newcommand{\calQ}{\mathcal Q}

\newcommand{\GP}{\Gamma^+(\calP)}
\newcommand{\GPd}{\Gamma^+(\calP^{\delta})}
\newcommand{\ch}[1]{\overline{#1}}
\newcommand{\GcP}{\Gamma^+(\ch{\calP})}

\newcommand{\GQ}{\Gamma^+(\calQ)}

\newcommand{\GK}{\Gamma^+(\calK)}
\newcommand{\GPQ}{\G^+(\calP \mix \calQ)}
\newcommand{\GcPQ}{\G^+(\calP) \comix \G^+(\calQ)}
\newcommand{\mix}{\diamond}
\newcommand{\comix}{\boxempty}
\newcommand{\comment}[1]{}
\newcommand{\eps}{\epsilon}
    
\begin{document}

\title{Constructing Self-Dual Chiral Polytopes}

\author{Gabe Cunningham\\
Department of Mathematics\\
Northeastern University\\
Boston, Massachusetts,  USA, 02115
}

\date{ \today }
\maketitle

\begin{abstract}
An abstract polytope is \emph{chiral} if its automorphism group has two orbits on the flags, such that adjacent 
flags belong to distinct orbits. There are still few examples of chiral polytopes, and few constructions
that can create chiral polytopes with specified properties. In this paper, we show how to build self-dual 
chiral polytopes using the mixing construction for polytopes.

\vskip.1in
\medskip
\noindent
Key Words: abstract regular polytope, chiral polytope, self-dual polytope, chiral map. 

\medskip
\noindent
AMS Subject Classification (2000):  Primary: 52B15.  Secondary: 51M20, 05C25.

\end{abstract}

\section{Introduction}
	The study of abstract polytopes is a growing field, uniting combinatorics with geometry and
	group theory. One particularly active area of research is the study of \emph{chiral polytopes}.
	Chiral polytopes are ``half-regular'': the action of the automorphism group on the flags has two orbits, and 
	adjacent flags belong to distinct orbits. Choosing one of the two orbits amounts to choosing an ``orientation'',
	and we say that the two orientations are \emph{enantiomorphic} or that they are \emph{mirror images}
	(of each other). 
	
	Chiral maps (also called irreflexible maps) have been studied for some time (see \cite{cm}), and the study of 
	chiral maps and hypermaps continues to yield interesting developments (for example, see \cite{aj}).
	However, it was only with the introduction of abstract polytopes that the notion of chirality was defined for 
	structures of ranks 4 and higher \cite{chiral}.
	
	The well-known geometric operation of dualizing a convex polytope (resulting in the polar polytope) has a
	simple analogue with abstract polytopes. In fact, the dual of an abstract polytope is obtained simply
	by reversing the partial order of the faces. When a polytope is isomorphic to its dual, it is said
	to be \emph{self-dual}. A self-dual chiral polytope is \emph{properly self-dual} if its dual has the same orientation
	(i.e., choice of flag orbit), and \emph{improperly self-dual} otherwise. Self-duality of chiral polytopes
	is studied in detail in \cite{chiral-dual}.
	
	There are still few known concrete examples of finite chiral polytopes. Only recently have we found
	general ways of building new chiral polytopes of higher ranks (see \cite{const}, \cite{chp}, and \cite{pel}). 
	In this paper, we use the mixing construction, introduced in \cite{mix-face} for regular polytopes and then in 
	\cite{const} for chiral 
	polytopes. To construct self-dual chiral polytopes, we mix a chiral polytope with its dual or with
	the mirror image of its dual. This always yields something which is self-dual, but it may not
	be chiral or polytopal. Our goal, then, is to find simple criteria that guarantee that we do actually get
	a self-dual chiral polytope.
	
	For our construction, confirming polytopality seems somewhat more difficult than confirming chirality. Though 
	there are some
	general results on when the mix of two polytopes is polytopal, they tend not to work well when applied
	to a polytope and its dual. They are particularly ill-suited to polytopes of even rank. In fact,
	in some cases we are able to prove that the mix of a polytope of even rank with its dual is not polytopal.
	On the other hand, our construction works particularly well with chiral polyhedra,
	because the mix of two chiral polyhedra is always a polyhedron (i.e., polytopal).
	
	We start by giving some background information on duality of abstract regular and chiral polytopes in 
	Section~2.  In Section~3, we introduce the mixing operation for chiral and directly regular polytopes,
	and we give a few results for when the mix of two polytopes is again a polytope.
	Then, in Section~4, we examine the mix of a polytope with its dual, proving that the mix is self-dual and
	determining some criteria for when the mix is polytopal. In Section~5, we determine several simple criteria
	for when the mix of a polytope with its dual is chiral. Finally, we end Section~5 by showing broad
	circumstances under which the mix of a chiral polytope with its dual is a chiral self-dual polytope, and
	we give some concrete examples in Section~6.

\section{Polytopes}

	General background information on abstract polytopes can be found in \cite[Chs. 2, 3]{arp}, and information
	on chiral polytopes specifically can be found in \cite{chiral}.
	Here we review the concepts essential for this paper.

	\subsection{Definition of a polytope}

		Let $\calP$ be a ranked partially ordered set whose elements will be called \emph{faces}. 
		The faces of $\calP$ will range in rank from $-1$ to $n$, and a face of rank $j$ is called a 
		\emph{$j$-face}. The $0$-faces, $1$-faces, and $(n-1)$-faces are also 
		called \emph{vertices}, \emph{edges}, and \emph{facets}, respectively. A \emph{flag} of
		$\calP$ is a maximal chain. We say that two flags are \emph{adjacent} if they differ 
		in exactly one face, and that they are \emph{$j$-adjacent} if they differ only in their $j$-face. 
		If $F$ and $G$ are faces of $\calP$
		such that $F \leq G$, then the \emph{section} $G / F$ consists of those faces $H$ such that
		$F \leq H \leq G$.
		
		We say that $\calP$ is an \emph{(abstract) polytope of rank $n$}, also called an \emph{$n$-polytope}, 
		if it satisfies the following four properties:
		
		\begin{enumerate}
		\item There is a unique greatest face $F_n$ of rank $n$ and a unique least face $F_{-1}$ of rank $-1$.
		\item Each flag of $\calP$ has $n+2$ faces.
		\item $\calP$ is \emph{strongly flag-connected}, meaning that if $\Phi$ and $\Psi$
		are two flags of $\calP$, then there is a sequence of flags $\Phi = \Phi_0, \Phi_1, \ldots, \Phi_k = \Psi$ 
		such that for $i = 0, \ldots, k-1$, the flags $\Phi_i$ and $\Phi_{i+1}$ are adjacent, and each
		$\Phi_i$ contains $\Phi \cap \Psi$.
		\item (Diamond condition): Whenever $F < G$, where $F$ is a $(j-1)$-face and $G$ is a $(j+1)$-face
		for some $j$, then there are exactly two $j$-faces $H$ with $F < H < G$.
		\end{enumerate}
		
		Note that due to the diamond condition, any flag $\Phi$ has a unique $j$-adjacent flag (denoted
		$\Phi^j$) for each $j = 0, 1, \ldots, n-1$.
		
		If $F$ is a $j$-face and $G$ is a $k$-face of a polytope with $F \leq G$, then the section $G/F$ is a
		($k-j-1$)-polytope itself. We can identify a face $F$ with the section $F/F_{-1}$; if $F$ is a $j$-face,
		then $F/F_{-1}$ is a $j$-polytope. We call the section $F_n/F$ the \emph{co-face at $F$}. The co-face
		at a vertex is also called a \emph{vertex-figure}. The section $F_{n-1}/F_0$ of a facet over a
		vertex is called a \emph{medial section}. Note that the medial section $F_{n-1}/F_0$ is both a facet
		of the vertex-figure $F_n/F_0$ as well as a vertex-figure of the facet $F_{n-1}/F_{-1}$.

		We sometimes need to work with \emph{pre-polytopes}, which are ranked partially ordered sets that
		satisfy the first, second, and fourth property above, but not necessarily the third. In this paper, all 
		of the pre-polytopes we encounter will be \emph{flag-connected}, meaning that if $\Phi$ and $\Psi$ are two 
		flags, there is a sequence of flags $\Phi = \Phi_0, \Phi_1, \ldots, \Phi_k = \Psi$ such
		that for $i = 0, \ldots, k-1$, the flags $\Phi_i$ and $\Phi_{i+1}$ are adjacent (but we do not require each
		flag to contain $\Phi \cap \Psi$). When working with pre-polytopes, we apply all the same terminology as 
		with polytopes. 
	
	\subsection{Regularity}
	
		For polytopes $\calP$ and $\calQ$, an \emph{isomorphism} from $\calP$ to $\calQ$ is an incidence- and rank-preserving bijection on the set of 
		faces. An isomorphism from $\calP$ to itself is an \emph{automorphism} of $\calP$. We denote the group of 
		all automorphisms of $\calP$ by $\G(\calP)$. There is a natural action of $\G(\calP)$ on the
		flags of $\calP$, and we say that $\calP$ is \emph{regular} if this action is transitive.
		For convex polytopes, this definition is equivalent to any of the usual definitions of regularity.
		
		Given a regular polytope $\calP$, fix a \emph{base flag} $\Phi$. Then the automorphism
		group $\G(\calP)$ is generated by the \emph{abstract reflections} $\rho_0, \ldots, \rho_{n-1}$,
		where $\rho_i$ maps $\Phi$ to the unique flag $\Phi^i$ that is $i$-adjacent to $\Phi$. 
		These generators satisfy $\rho_i^2 = \eps$ for all $i$, and $(\rho_i \rho_j)^2 = \eps$ for all $i$ 
		and $j$ such that $|i - j| \geq 2$.
		We say that $\calP$ has (\emph{Schl\"afli}) \emph{type} $\{p_1,\ldots,p_{n-1}\}$ if
		for each $i = 1, \ldots, n-1$ the order of $\rho_{i-1} \rho_i$ is $p_i$ (with $2 \leq p_i \leq \infty$).
		We also use $\{p_1, \ldots, p_{n-1}\}$ to represent the universal regular polytope of this type,
		which has an automorphism group with no relations other than those mentioned above.
		We denote the (Coxeter) group $\G(\{p_1, \ldots, p_{n-1}\})$ by $[p_1, \ldots, p_{n-1}]$.
		Whenever this universal polytope corresponds to a regular convex polytope, then the name used
		here is the same as the usual Schl\"afli symbol for that polytope (see \cite{coxeter}).
		
		For $I \subseteq \{0, 1, \ldots, n-1\}$ and a group $\G = \langle \rho_0, \ldots, \rho_{n-1} \rangle$,
		we define $\G_I := \langle \rho_i \mid i \in I \rangle$. 
		The strong flag-connectivity of polytopes induces the following {\em intersection property\/} in the group:
		\begin{equation}
		\label{eq:reg-int}
		\G_I \cap \G_J = \G_{I \cap J} 
		\;\; \textrm{ for } I,J \subseteq \{0, \ldots, n-1\}.
		\end{equation}

		In general, if $\G = \langle \rho_0, \ldots, \rho_{n-1} \rangle$ is a group such that each
		$\rho_i$ has order $2$ and such that $(\rho_i \rho_j)^2 = \eps$ whenever $|i - j| \geq 2$, then
		we say that $\G$ is a \emph{string group generated by involutions} (or \emph{sggi}). If
		$\G$ also satisfies the intersection property given above, then we call $\G$ a \emph{string
		C-group}. There is a natural way of building a regular polytope $\calP(\G)$ from a string
		C-group $\G$ such that $\G(\calP(\G)) = \G$ (see \cite[Ch. 2E]{arp}). Therefore, there is a one-to-one 
		correspondence between regular $n$-polytopes and string C-groups on $n$ specified generators.

	\subsection{Direct Regularity and Chirality}

		If $\calP$ is a regular polytope with automorphism group $\G(\calP)$ generated
		by $\rho_0, \ldots, \rho_{n-1}$, then the \emph{abstract rotations}
		\[ \s_i := \rho_{i-1} \rho_i \; (i = 1, \ldots, n-1) \]
		generate the \emph{rotation subgroup} $\GP$ of $\G(\calP)$, which has index at 
		most~$2$. We say that $\calP$ is \emph{directly regular} if this index is $2$. This is essentially an
		orientability condition; for example, the directly regular polyhedra correspond to orientable regular maps.
		The convex regular polytopes are all directly regular.
	
		We say that an $n$-polytope $\calP$ is \emph{chiral} if the action of $\G(\calP)$ on the flags
		of $\calP$ has two orbits such that adjacent flags are always in distinct orbits.
		For convenience, we define $\GP := \G(\calP)$ whenever $\calP$ is chiral.
		Given a chiral polytope $\calP$, fix a base flag $\Phi=\{F_{-1}, F_0, \ldots, F_n\}$.
		Then the automorphism group $\GP$ is generated by elements $\sigma_1, \ldots, \sigma_{n-1}$,
		where $\s_i$ acts on $\Phi$ the same way that $\rho_{i-1} \rho_i$ acts on the base flag of a regular
		polytope. That is, $\s_i$ sends $\Phi$ to $\Phi^{i,i-1}$. For $i < j$, we get that $(\s_i \cdots
		\s_j)^2 = \eps$. In analogy to regular polytopes, if the order of each $\s_i$ is $p_i$, 
		we say that the $\emph{type}$ of $\calP$ is $\{p_1, \ldots, p_{n-1}\}$.

		The automorphism groups of chiral polytopes and the rotation groups of directly regular polytopes satisfy 
		an intersection property analogous to that for string C-groups. Let $\G^+ := \GP = \langle \s_1, \ldots, 
		\s_{n-1} \rangle$ be the rotation group of a chiral or directly regular polytope $\calP$. For 
		$1 \leq i < j \leq n-1$ define $\tau_{i,j}:= \s_i \cdots \s_j$.
		By convention, we also define $\tau_{i,i} = \s_i$, and for $0 \leq i \leq n$, we define $\tau_{0,i} =
		\tau_{i,n} = \eps$.
		For $I \subseteq \{0, \ldots, n-1\}$, set
		\[ \G^+_I := \langle \tau_{i,j} \mid i \leq j \tx{ and } i-1, j \in I \rangle.  \]
		Then the \emph{intersection property} for $\G^+$ is given by:
		\begin{equation}
		\label{eq:chiral-int}
		\G^+_I \cap \G^+_J = \G^+_{I\cap J}  
		\;\; \textrm{ for } I,J \subseteq \{0, \ldots, n-1\}.
		\end{equation}
	
		If $\G^+$ is a group generated by elements $\s_1, \ldots, \s_{n-1}$ such that $(\s_i \cdots \s_j)^2 =
		\eps$ for $i < j$, and if $\G^+$ satisfies the intersection property above,
		then $\G^+$ is either the automorphism group of a chiral $n$-polytope or the rotation subgroup 
		of a directly regular polytope. In particular, it is the rotation subgroup
		of a directly regular polytope if and only if there is a group automorphism of $\G^+$ that sends $\s_1$ to 
		$\s_1^{-1}$, $\s_2$ to $\s_1^2 \s_2$, and fixes every other generator.
		
		Suppose $\calP$ is a chiral polytope with base flag $\Phi$ and with $\GP = \langle \s_1, \ldots,
		\s_{n-1} \rangle$. Let $\ch{\calP}$ be the chiral polytope with the same underlying face-set
		as $\calP$, but with base flag $\Phi^0$. Then $\G^+(\ch{\calP}) = \langle \s_1^{-1}, \s_1^2 \s_2, \s_3,
		\ldots, \s_{n-1} \rangle$. We call $\ch{\calP}$ the \emph{enantiomorphic form} or \emph{mirror image}
		of $\calP$. Though $\calP \simeq \ch{\calP}$, there is no automorphism of $\calP$ that takes
		$\Phi$ to $\Phi^0$.
		
		Let $\G^+ = \langle \s_1, \ldots, \s_{n-1} \rangle$, and let $w$ be a word in the free group on these 
		generators. We define the \emph{enantiomorphic} (or \emph{mirror image}) word $\ch{w}$ of $w$ to be the word obtained from $w$ by replacing every occurrence of $\s_1$ by $\s_1^{-1}$ and $\s_2$ by $\s_1^2\s_2$, 
		while keeping all $\s_j$ with $j\geq 3$ unchanged. Then if $\G^+$ is the rotation subgroup of a directly
		regular polytope, the elements of $\G^+$ corresponding to $w$ and $\ch{w}$ are conjugate in the
		full group $\G$.
		On the other hand, if $\G^+$ is the automorphism group of a chiral polytope, then $w$ and $\ch{w}$ need 
		not even have the same period. Note that $\ch{\ch{w}} = w$ for all words $w$.
		
		The sections of a regular polytope are again regular, and the sections of a chiral polytope are either
		directly regular or chiral. Furthermore, for a chiral $n$-polytope, all the $(n-2)$-faces and all the 
		co-faces at edges must be directly regular. As a consequence, if $\calP$ is a chiral polytope, it may be
		possible to extend it to a chiral polytope having facets isomorphic to $\calP$, but it will then
		be impossible to extend that polytope once more to a chiral polytope. 

		Chiral polytopes only exist in ranks 3 and higher.
		The simplest examples are the torus maps $\{4,4\}_{(b,c)}$, $\{3,6\}_{(b,c)}$ and 
		$\{6,3\}_{(b,c)}$, with $b,c\neq 0$ and $b\neq c$ (see \cite{cm}). These give rise to chiral $4$-polytopes
		having toroidal maps as facets and/or vertex-figures. More examples of chiral 4- and 5-polytopes can
		be found in \cite{chp}.
	
		Let $\calP$ and $\calQ$ be two polytopes (or flag-connected pre-polytopes) of the same rank, not 
		necessarily regular or chiral. A function $\gamma: \calP \to \calQ$ is called a
		\emph{covering} if it preserves incidence of faces, ranks of faces, and adjacency of flags; then $\gamma$ is
		necessarily surjective, by the flag-connectedness of $\calQ$. We say that $\calP$ \emph{covers} $\calQ$
		if there exists a covering $\gamma: \calP \to \calQ$.
		
		If a regular or chiral $n$-polytope $\calP$ has facets $\calK$ and vertex-figures $\calL$, we say 
		that $\calP$ is of \emph{type} $\{\calK,\calL\}$. If $\calP$ is of type $\{\calK, \calL\}$ and it covers
		every other polytope of the same type, then we say that $\calP$ is the \emph{universal polytope of
		type $\{\calK, \calL\}$}, and we simply denote it by $\{\calK, \calL\}$.

		If $\calP$ and $\calQ$ are chiral or directly regular $n$-polytopes, their rotation groups
		are both quotients of 
		\[ W^+ := [\infty, \ldots, \infty]^+ = \langle \s_1, \ldots, \s_{n-1} \mid (\s_i \cdots \s_j)^2
		= \eps \tx{ for $1 \leq i < j \leq n-1$} \rangle. \]
		Therefore there are normal subgroups $M$ and $K$ of $W^+$ such that $\GP = W^+/M$ and $\GQ = W^+/K$. Then
		$\calP$ covers $\calQ$ if and only if $M \leq K$.
	 
		Let $\calP$ be a chiral or directly regular polytope with $\GP = W^+/M$. We define 
		\[ \ch{M} = \{\ch{w} \mid w \in M\}. \]
		If $\ch{M} = M$, then $\calP$ is directly regular. Otherwise, $\calP$ is chiral, and $\GcP = W^+/\ch{M}$.
		
	\subsection{Duality}

		For any polytope $\calP$, we obtain the \emph{dual of $\calP$} (denoted $\calPd$) by simply reversing the 
		partial order. A \emph{duality} from $\calP$ to $\calQ$ is an anti-isomorphism; that is, a bijection 
		$\delta$ between the face sets such that $F < G$ in $\calP$ if and only if $\delta(F) > \delta(G)$ in $\calQ$.
		If a polytope is isomorphic to its dual, then it is called \emph{self-dual}.
		
		If $\calP$ is of type $\{\calK, \calL\}$, then $\calPd$ is of type $\{\calL^{\delta}, \calK^{\delta}\}$.
		Therefore, in order for $\calP$ to be self-dual, it is necessary (but not sufficient) that $\calK$
		is isomorphic to $\calL^{\delta}$ (in which case it is also true that $\calK^{\delta}$ is isomorphic
		to $\calL$).
		
		A self-dual regular polytope always possesses a duality that fixes the base flag. For chiral polytopes,
		this may not be the case. If a self-dual chiral polytope $\calP$ possesses a duality that sends the base flag to another flag in the same orbit (but reversing its direction), then there is a duality that fixes 
		the base flag, and we say that $\calP$ is \emph{properly self-dual} \cite{chiral-dual}. In this case, the 
		groups $\GP$ and $\GPd$ have identical presentations. If a self-dual chiral polytope has no duality that 
		fixes the base flag, then every duality sends the base flag to a flag in the other orbit, and $\calP$
		is said to be \emph{improperly self-dual}. In this case, the groups $\GcP$ and $\GPd$ have identical
		presentations instead. 
		
		If $\calP$ is a regular polytope with $\G(\calP) = \langle \rho_0, \ldots, \rho_{n-1} \rangle$, then the 
		group of $\calPd$ is $\G(\calPd) = \langle \rho_0', \ldots, \rho_{n-1}' \rangle$, where $\rho_i' = 
		\rho_{n-1-i}$. If $\calP$ is a directly regular or chiral polytope with $\GP = \langle \s_1, \ldots, 
		\s_{n-1} \rangle$, then the rotation group of $\calPd$ is $\G^+(\calPd) = \langle \s_1', \ldots,
		\s_{n-1}' \rangle$, where $\s_i' = \s_{n-i}^{-1}$. Equivalently, if $\GP$ has presentation
		\[ \langle \s_1, \ldots, \s_{n-1} \mid w_1, \ldots, w_k \rangle \]
		then $\G^+(\calPd)$ has presentation
		\[ \langle \s_1', \ldots, \s_{n-1}' \mid \delta(w_1), \ldots, \delta(w_k) \rangle, \]
		where if $w = \s_{i_1} \cdots \s_{i_j}$, then $\delta(w) = (\s_{n-i_1}')^{-1} \cdots (\s_{n-i_j}')^{-1}$.
		
		Suppose $\calP$ is a chiral or directly regular polytope with $\GP = W^+/M$. Then $\G^+(\calPd) =
		W^+/\delta(M)$, where $\delta(M) = \{ \delta(w) \mid w \in M \}$. If $\delta(M) = M$, then
		$\GP = \G^+(\calPd)$, so $\calP$ is properly self-dual.
		
		If $\calP$ is a chiral polytope, then $\ch{\calP^{\delta}}$ is naturally isomorphic
		to $\ch{\calP}^{\delta}$. Indeed, if $w$ is a word in the generators $\s_1, \ldots, \s_{n-1}$ of
		$\GP$, then 
		\[ \delta(\ch{w}) = (\s_1 \s_2 \cdots \s_{n-1}) \ch{\delta(w)} (\s_1 \s_2 \cdots \s_{n-1})^{-1}, \]
		so we see that the presentation for $\ch{\calP^{\delta}}$ is equivalent to that of $\ch{\calP}^{\delta}$.
		In particular, if $\GP = W^+/M$, then $\delta(\ch{M}) = \ch{\delta(M)}$ (since $M$ is a normal subgroup
		of $W^+$), and thus $\ch{\delta(\ch{\delta(M)})} = M$.

\section{Mixing polytopes}

	In this section, we will define the mix of two finitely presented groups, which naturally
	gives rise to a way to mix polytopes. The mixing operation is analogous to the join of hypermaps 
	\cite{ant2} and the parallel product of maps \cite{wilson}.
	
	Let $\G = \langle x_1, \ldots, x_n \rangle$ and $\G' =
	\langle x_1', \ldots, x_n' \rangle$ be groups with $n$ specified generators. Then the elements
	$z_i = (x_i, x_i') \in \G \times \G'$ (for $i = 1, \ldots, n$) generate a subgroup of
	$\G \times \G'$ that we call the \emph{mix} of $\G$ and $\G'$ and denote $\G \mix \G'$
	(see \cite[Ch.7A]{arp}).

	If $\calP$ and $\calQ$ are chiral or directly regular $n$-polytopes, we can mix their rotation groups.
	Let $\GP = \langle \s_1, \ldots, \s_{n-1} \rangle$ and $\GQ = \langle
	\s_1', \ldots, \s_{n-1}' \rangle$. Let $\beta_i = (\s_i, \s_i')$ for $i = 1, \ldots, n-1$.
	Then $\GP \mix \GQ = \langle \beta_1, \ldots, \beta_{n-1} \rangle$. We note that for $i < j$, we have
	$(\beta_i \cdots \beta_j)^2 = \eps$, so that the group $\GP \mix \GQ$ can be written as a quotient of $W^+$.
	In general, however, it will not have the intersection property (\eref{chiral-int}) with respect to its 
	generators $\beta_1, \ldots, \beta_{n-1}$. Nevertheless, it is possible to build a directly regular or
	chiral flag-connected pre-polytope from $\GP \mix \GQ$ using the method outlined in \cite{chiral}, and we 
	denote that pre-polytope $\calP \mix \calQ$
	and call it the \emph{mix} of $\calP$ and $\calQ$. Thus $\GPQ = \GP \mix \GQ$. If $\GP \mix \GQ$ satisfies
	the intersection property, then $\calP \mix \calQ$ is in fact a polytope.
	
	The following proposition is proved in \cite{const}:

	\begin{proposition}
	\label{prop:mix}
	Let $\calP$ and $\calQ$ be chiral or directly regular polytopes with $\GP = W^+/M$ and
	$\GQ = W^+/K$. Then $\GPQ \simeq W^+/(M \cap K)$.
	\end{proposition}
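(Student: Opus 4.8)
The plan is to identify $\GPQ = \GP \mix \GQ$ as a subgroup of $\GP \times \GQ = (W^+/M) \times (W^+/K)$ and then show this subgroup is exactly the image of $W^+$ under the diagonal-type map, whose kernel is $M \cap K$. First I would set up the natural surjection $\pi \colon W^+ \to \GP \times \GQ$ determined by $\s_i \mapsto (\s_i M, \s_i' K)$, i.e. $w \mapsto (wM, wK)$; this is well-defined because both $\GP$ and $\GQ$ are quotients of $W^+$ via the canonical generators. The image of $\pi$ is generated by the elements $(\s_i M, \s_i' K) = \beta_i$, so $\mathrm{im}(\pi) = \langle \beta_1, \ldots, \beta_{n-1} \rangle = \GP \mix \GQ$ by definition of the mix. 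The kernel of $\pi$ is $\{w \in W^+ : wM = M \text{ and } wK = K\} = \{w \in W^+ : w \in M \text{ and } w \in K\} = M \cap K$. By the first isomorphism theorem, $W^+/(M \cap K) \simeq \mathrm{im}(\pi) = \GPQ$, and this isomorphism sends the image of $\s_i$ in $W^+/(M\cap K)$ to $\beta_i$, so it respects the generating sets, which is what is needed for the identification $\GPQ \simeq W^+/(M\cap K)$ as groups with specified generators.

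Two small points deserve care. First, one must check that $M \cap K$ is normal in $W^+$, which is immediate since $M$ and $K$ are each normal; this is what makes $W^+/(M \cap K)$ a group with the inherited generators $\s_i (M\cap K)$. Second, one should confirm that $\GPQ$, as constructed in the excerpt, is \emph{defined} to be the group $\GP \mix \GQ$ together with its generators $\beta_1, \ldots, \beta_{n-1}$ — the text states exactly this (``Thus $\GPQ = \GP \mix \GQ$''), so no separate argument about the pre-polytope construction is needed here; the proposition is purely a statement about the group. One also notes in passing that the relations $(\beta_i \cdots \beta_j)^2 = \eps$ hold automatically in the quotient $W^+/(M\cap K)$ since they already hold in $W^+$, consistent with $\GPQ$ being a quotient of $W^+$.

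I do not anticipate a serious obstacle: the entire argument is a one-line application of the isomorphism theorem once the map $\pi$ is written down, and the only thing to be vigilant about is bookkeeping with the two sets of generators $\s_i$ and $\s_i'$ and the fact that $M, K \trianglelefteq W^+$ so that ``$wM = M$'' is equivalent to ``$w \in M$''. If anything is delicate, it is merely making explicit that the subgroup of $\GP \times \GQ$ generated by the $\beta_i$ coincides with the image of the diagonal map $\pi$ — but this is true essentially by definition, since a subgroup generated by a set of elements is the image of the free-product/free-group map sending generators to those elements, and here $W^+$ already surjects onto each factor compatibly. Hence the proof is short and the main content is simply recognizing the right commutative square.
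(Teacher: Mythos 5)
Your proof is correct: the diagonal map $w \mapsto (wM, wK)$ has image generated by the $\beta_i$ and kernel $M \cap K$, so the first isomorphism theorem gives the claim, generators matching generators. The paper itself does not reprove this statement but cites \cite{const} for it, and your argument is the standard one used there, so there is nothing to add.
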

	
	Determining the size of $\GP \mix \GQ$ is often difficult for a computer unless $\GP$ and $\GQ$ are
	both fairly small. However, there is usually an easy way to indirectly calculate the size of the mix
	using the \emph{comix} of two groups. If $\G$ has presentation $\gp{x_1, \ldots, x_n}{R}$ and $\G'$ has 
	presentation $\gp{x_1', \ldots, x_n'}{S}$,
	then we define the comix of $\G$ and $\G'$, denoted $\G \comix \G'$, to be the group with presentation
	\[ \gp{x_1, x_1', \ldots, x_n, x_n'}{R, S, x_1^{-1}x_1', \ldots, x_n^{-1}x_n'}.\]
	Informally speaking, we can just add the relations from $\G'$ to $\G$, rewriting them to use
	$x_i$ in place of $x_i'$. 

	Just as the mix of two rotation groups has a simple description in terms of quotients of $W^+$, so
	does the comix of two rotation groups:

	\begin{proposition}
	\label{prop:comix}
	Let $\calP$ and $\calQ$ be chiral or directly regular polytopes with $\GP = W^+/M$ and
	$\GQ = W^+/K$. Then $\GcPQ \simeq W^+/MK$.
	\end{proposition}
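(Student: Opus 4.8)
The plan is to work directly with presentations and the universal rotation group $W^+$. We have $\GP = W^+/M$ and $\GQ = W^+/K$, so fix presentations $\GP = \gp{\s_1,\ldots,\s_{n-1}}{R}$ and $\GQ = \gp{\s_1',\ldots,\s_{n-1}'}{S}$, where $R$ consists of the defining relators of $W^+$ together with a set of words whose normal closure in $W^+$ is $M$, and similarly $S$ corresponds to $K$. By definition, $\GcPQ$ has presentation $\gp{\s_1,\s_1',\ldots,\s_{n-1},\s_{n-1}'}{R,\,S,\,\s_i^{-1}\s_i'\ (i=1,\ldots,n-1)}$. First I would use the relators $\s_i^{-1}\s_i'$ to perform Tietze transformations eliminating each generator $\s_i'$ in favor of $\s_i$: this rewrites every relator in $S$ into the corresponding word in the $\s_i$, leaving the presentation $\gp{\s_1,\ldots,\s_{n-1}}{R,\,S'}$, where $S'$ is $S$ with each $\s_i'$ replaced by $\s_i$. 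Since the relators of $W^+$ already appear in $R$, this is a presentation of $W^+$ modulo the normal closure of (the $M$-words) $\cup$ (the $K$-words), i.e.\ $\GcPQ \simeq W^+/MK$, because the normal closure of the union of two generating sets of normal subgroups $M$ and $K$ is exactly $MK$ (which is itself a normal subgroup of $W^+$).

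An alternative, more conceptual route I would consider is to exhibit the comix as a pushout: the comix $\G\comix\G'$ is, by construction, the largest common quotient of $\G$ and $\G'$ identifying corresponding generators, i.e.\ the pushout of $\G \twoheadleftarrow F \twoheadrightarrow \G'$ over the free group $F$ on $n{-}1$ generators (compatibly with the maps from $W^+$). Concretely, there are natural surjections $\GP \to \GcPQ$ and $\GQ \to \GcPQ$ agreeing on the images of $\s_1,\ldots,\s_{n-1}$, and $\GcPQ$ is universal with this property. On the other hand, $W^+/MK$ is the largest quotient of $W^+$ through which both $W^+/M \to W^+/MK$ and $W^+/K \to W^+/MK$ factor: a quotient $W^+/N$ receives maps from both $W^+/M$ and $W^+/K$ (over $W^+$) precisely when $M \leq N$ and $K \leq N$, equivalently $MK \leq N$. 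Matching these two universal properties gives the isomorphism, and it visibly respects the generators, so $\GcPQ \simeq W^+/MK$ as quotients of $W^+$.

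I would present the Tietze-transformation argument as the main proof since it is the most elementary and self-contained, and perhaps remark on the dual relationship with Proposition~\ref{prop:mix}: the mix corresponds to $M \cap K$ (the pullback/largest common sub-cover) while the comix corresponds to $MK$ (the pushout/smallest common quotient-cover), which is why the comix is computationally convenient — presentations add relators, so $|MK|$-type data is easy to get from a coset enumeration. The one point requiring a word of care is the claim that eliminating the $\s_i'$ via the relators $\s_i^{-1}\s_i'$ is a valid Tietze transformation that produces exactly the stated presentation; this is routine but should be stated, since it is the step where one verifies that no relations are lost or spuriously introduced. I expect no genuine obstacle here: the result is essentially a restatement of the definition of the comix together with the standard fact that the normal closure of $M \cup K$ in $W^+$ equals $MK$ when $M$ and $K$ are already normal.
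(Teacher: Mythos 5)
Your main argument is essentially the paper's own proof: rewrite the comix on the single generating set $\s_1,\ldots,\s_{n-1}$ (the paper does this informally, you justify it via Tietze transformations) and then identify the normal closure of the combined relator sets with $MK$, using normality of $M$ and $K$. The proof is correct; the pushout remark and the explicit Tietze step are fine additions but do not change the route.
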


	\begin{proof}
	Let $\GP = \langle \s_1, \ldots, \s_{n-1} \mid R \rangle$, and let $\GQ = \langle \s_1, \ldots, \s_{n-1} \mid
	S \rangle$, where $R$ and $S$ are sets of relators in $W^+$. 
	Then $M$ is the normal closure of $R$ in $W^+$ and $K$ is the normal closure of $S$ in $W^+$.
	We can write $\GcPQ = \langle \s_1, \ldots, \s_{n-1} \mid R \cup S \rangle$, so we want to show that $MK$ is
	the normal closure of $R \cup S$ in $W^+$. It is clear that $MK$ contains $R \cup S$, and since
	$M$ and $K$ are normal, $MK$ is normal, and so it contains the normal closure of $R \cup S$.
	To show that $MK$ is contained in the normal closure of $R \cup S$, it suffices to show that if
	$N$ is a normal subgroup of $W^+$ that contains $R \cup S$, then it must also contain $MK$. Clearly,
	such an $N$ must contain the normal closure $M$ of $R$ and the normal closure $K$ of $S$. Therefore,
	$N$ contains $MK$, as desired.
	\end{proof}

	Now we can determine how the size of $\GP \mix \GQ$ is related to the size of $\GP \comix \GQ$.
	
	\begin{proposition}
	\label{prop:mix-size}
	Let $\calP$ and $\calQ$ be finite chiral or directly regular $n$-polytopes. Then 
	\[ |\GP \mix \GQ| \cdot |\GcPQ| = |\GP| \cdot |\GQ|. \]
	\end{proposition}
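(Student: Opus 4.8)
The plan is to translate everything into indices of subgroups of $W^+$ and then apply a standard counting argument. Write $\GP = W^+/M$ and $\GQ = W^+/K$. By \pref{mix}, $\GP \mix \GQ = \GPQ \simeq W^+/(M \cap K)$, and by \pref{comix}, $\GcPQ \simeq W^+/MK$. Hence, denoting the index by $[\,\cdot:\cdot\,]$, the identity to be proved is precisely
\[ [W^+ : M \cap K] \cdot [W^+ : MK] = [W^+ : M] \cdot [W^+ : K]. \]

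First I would dispose of finiteness. Since $\calP$ and $\calQ$ are finite, $M$ and $K$ have finite index in $W^+$; then $M \cap K$ has index at most $[W^+:M]\cdot[W^+:K]$ and is therefore of finite index, and $MK \supseteq M$ is of finite index as well. So every index that appears is a finite positive integer, and there is no difficulty in manipulating the displayed equation. (Note that $W^+$ itself is infinite, so these really are indices and not group orders.)

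The heart of the argument is the second isomorphism theorem. Because $M$ and $K$ are normal in $W^+$, the product $MK$ is a subgroup, $K$ is normal in $MK$, and $M/(M\cap K) \simeq MK/K$; in particular $[M : M \cap K] = [MK : K]$. Combining this with the tower law $[W^+ : M \cap K] = [W^+ : M]\cdot[M : M \cap K]$ gives $[W^+ : M \cap K] = [W^+ : M]\cdot[MK : K]$. Multiplying by $[W^+ : MK]$ and using the tower law once more, now in the form $[W^+ : MK]\cdot[MK : K] = [W^+ : K]$, yields
\[ [W^+ : M \cap K] \cdot [W^+ : MK] = [W^+ : M] \cdot [MK : K] \cdot [W^+ : MK] = [W^+ : M] \cdot [W^+ : K], \]
which is the claim. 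Rephrasing back through \pref{mix} and \pref{comix} gives $|\GP \mix \GQ| \cdot |\GcPQ| = |\GP| \cdot |\GQ|$.

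I do not expect a serious obstacle here; the only points needing a little care are the finiteness bookkeeping above and the observation that the second isomorphism theorem applies because both $M$ and $K$ are normal in $W^+$ (so that $MK$ is genuinely a subgroup). Everything else is the elementary arithmetic of subgroup indices.
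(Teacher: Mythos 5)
Your proof is correct and is essentially the paper's own argument: after invoking Propositions~\ref{prop:mix} and~\ref{prop:comix} to identify the mix and comix with $W^+/(M\cap K)$ and $W^+/MK$, the key step in both is the second isomorphism theorem $M/(M\cap K)\simeq MK/K$, which the paper phrases via kernels of the natural epimorphisms and you phrase via the tower law for indices. The difference is purely one of bookkeeping, so no further comment is needed.
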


	\begin{proof}
	Let $\GP = W^+/M$ and $\GQ = W^+/K$. Then by \pref{mix}, $\GP \mix \GQ = W^+/(M \cap K)$, and 
	by \pref{comix}, $\GcPQ = W^+/MK$.
	Let $\pi_1: \GP \mix \GQ \to \GP$ and $\pi_2: \GQ \to \GcPQ$ be the natural epimorphisms. Then
	$\ker \pi_1 \simeq M/(M \cap K)$ and $\ker \pi_2 \simeq MK/K \simeq M/(M \cap K)$. Therefore,
	we have that 
	\begin{align*}
	|\GP \mix \GQ| &= |\GP||\ker \pi_1| \\
	&= |\GP||\ker \pi_2| \\
	&= |\GP||\GQ|/|\GcPQ|,
	\end{align*}
	and the result follows.
	\end{proof}

	\begin{corollary}
	\label{cor:trivial-comix}
	Let $\calP$ and $\calQ$ be finite chiral or directly regular $n$-polytopes such that the group $\GP \comix \GQ$
	is trivial. Then $\GP \mix \GQ = \GP \times \GQ$.
	\end{corollary}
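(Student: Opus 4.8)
The plan is to derive this immediately from \pref{mix-size} together with the fact that the mix is, by definition, a subgroup of the direct product. First I would recall from the construction of the mix that $\GP \mix \GQ$ is the subgroup of $\GP \times \GQ$ generated by the elements $\beta_i = (\s_i, \s_i')$ for $i = 1, \ldots, n-1$; in particular $\GP \mix \GQ \leq \GP \times \GQ$, and since $\calP$ and $\calQ$ are finite, this is a subgroup of a finite group.

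Next I would apply \pref{mix-size}, which gives $|\GP \mix \GQ| \cdot |\GcPQ| = |\GP| \cdot |\GQ|$. By hypothesis $\GP \comix \GQ$ is trivial, so $|\GcPQ| = 1$, and hence $|\GP \mix \GQ| = |\GP| \cdot |\GQ| = |\GP \times \GQ|$. A subgroup of a finite group whose order equals that of the whole group must coincide with it, so $\GP \mix \GQ = \GP \times \GQ$, as claimed.

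There is essentially no obstacle here: the corollary is a direct consequence of the preceding proposition. The only points that require any care are that the equality of orders is not by itself enough — one also needs the containment $\GP \mix \GQ \leq \GP \times \GQ$, which is built into the definition of the mix — and that finiteness of $\calP$ and $\calQ$ is what makes the comparison of orders legitimate.
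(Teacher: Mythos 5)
Your proof is correct and follows exactly the route the paper intends: the corollary is stated as an immediate consequence of \pref{mix-size}, using the containment $\GP \mix \GQ \leq \GP \times \GQ$ built into the definition of the mix and comparing orders. Nothing further is needed.
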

	
	The reason that \pref{mix-size} is so useful in calculating the size of $\GP \mix \GQ$ is that
	it is typically very easy for a computer to find the size of $\GP \comix \GQ$. Indeed, in many of the
	cases that come up in practice, it is easy to calculate $|\GP \comix \GQ|$ by hand just by combining
	the relations from $\GP$ and $\GQ$ and rewriting the presentation a little. 
	
	\subsection{Polytopality of the Mix}

		The mix of $\calP$ and $\calQ$ is polytopal if and only if $\GP \mix \GQ$ satisfies the intersection
		condition (\eref{chiral-int}). There is no general method for determining whether this condition
		is met. We start with the following result from \cite{const}.
			
		\begin{proposition}
		\label{prop:rel-prime-type}
		Let $\calP$ be a chiral or directly regular $n$-polytope of type $\{p_1, \ldots, p_{n-1}\}$, and let
		$\calQ$ be a chiral or directly regular $n$-polytope of type $\{q_1, \ldots, q_{n-1}\}$.
		If $p_i$ and $q_i$ are relatively prime for each $i = 1, \ldots, n-1$, then $\calP \mix \calQ$ is a 
		chiral or directly regular $n$-polytope of type $\{p_1 q_1, \ldots, p_{n-1} q_{n-1}\}$, and $\GPQ = 
		\GP \times \GQ$.
		\end{proposition}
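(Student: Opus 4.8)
The plan is to establish two things: first, that $\GP \mix \GQ = \GP \times \GQ$, and second, that this product group satisfies the intersection property (\eref{chiral-int}), which by the discussion in Section~3 is exactly what is needed for $\calP \mix \calQ$ to be a (chiral or directly regular) polytope. The type statement then follows almost for free: the order of $\beta_i = (\s_i, \s_i')$ in the product is $\lcm(p_i, q_i) = p_i q_i$ since $\gcd(p_i, q_i) = 1$.

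For the first step, I would use \cref{trivial-comix}: it suffices to show that $\GcPQ$ is trivial. Writing $\GP = W^+/M$ and $\GQ = W^+/K$, by \pref{comix} we have $\GcPQ = W^+/MK$, so I need $MK = W^+$. Here is where the relatively-prime hypothesis enters. The element $\s_i^{p_i}$ lies in $M$ (it is trivial in $\GP$), and $\s_i^{q_i}$ lies in $K$; since $\gcd(p_i, q_i) = 1$, there are integers $a, b$ with $a p_i + b q_i = 1$, so $\s_i = \s_i^{a p_i} \cdot \s_i^{b q_i} \in MK$. Thus $MK$ contains all the generators $\s_1, \ldots, \s_{n-1}$ of $W^+$, hence $MK = W^+$ and $\GcPQ$ is trivial. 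Applying \cref{trivial-comix} gives $\GP \mix \GQ = \GP \times \GQ$, and therefore $\GPQ = \GP \times \GQ$.

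The main obstacle is the second step: verifying the intersection property for the product group $\GP \times \GQ$ with respect to the generators $\beta_i = (\s_i, \s_i')$. I expect to argue as follows. For $I \subseteq \{0, \ldots, n-1\}$, the subgroup $(\GP \times \GQ)^+_I$ is generated by the elements $\tau_{i,j}$ (in the notation of the excerpt) with $i-1, j \in I$; since $\tau_{i,j}$ in the product is $(\tau_{i,j}^{\calP}, \tau_{i,j}^{\calQ})$, one checks that $(\GP \times \GQ)^+_I$ sits inside $\GP_I \times \GQ_I$, where $\GP_I$ and $\GQ_I$ denote the corresponding subgroups of $\GP$ and $\GQ$. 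Conversely I would want $(\GP \times \GQ)^+_I = \GP_I \times \GQ_I$; this is the delicate point, and it should follow from the coprimality of the orders together with a Goursat-type argument, using that $\GP_I$ and $\GQ_I$ are themselves rotation groups of the sections (which are chiral or directly regular, hence satisfy their own intersection properties) and that their orders are built from the $p_i$ and $q_i$ respectively, so $\gcd(|\GP_I|, |\GQ_I|)$ behaves well. Granting $(\GP \times \GQ)^+_I = \GP_I \times \GQ_I$, the intersection property for the product reduces to
\[ (\GP_I \times \GQ_I) \cap (\GP_J \times \GQ_J) = (\GP_I \cap \GP_J) \times (\GQ_I \cap \GQ_J) = \GP_{I \cap J} \times \GQ_{I \cap J}, \]
where the last equality uses \eref{chiral-int} for $\calP$ and for $\calQ$ separately. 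This establishes that $\calP \mix \calQ$ is polytopal, and combined with the type computation and the fact that it is chiral or directly regular (being built from a rotation group satisfying the intersection property), completes the proof.
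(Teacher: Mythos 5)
Your first step is essentially sound, with one small caveat: \cref{trivial-comix} is stated for \emph{finite} polytopes, while the proposition carries no finiteness hypothesis. This is easily repaired without the comix machinery: since $\gcd(p_i,q_i)=1$, there are integers $u,v$ with $uq_i\equiv 1 \pmod{p_i}$ and $vp_i\equiv 1\pmod{q_i}$, so $(\s_i,\eps)=\beta_i^{uq_i}$ and $(\eps,\s_i')=\beta_i^{vp_i}$ lie in $\langle\beta_i\rangle$; hence $\GP\mix\GQ$ contains $\GP\times\{\eps\}$ and $\{\eps\}\times\GQ$ and equals $\GP\times\GQ$, and $\beta_i$ has order $p_iq_i$, giving the type. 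The genuine gap is in your second step. Writing $\Lambda=\GP$, $\Delta=\GQ$, $\G^+=\Lambda\mix\Delta$, the claim that $\G^+_I=\Lambda_I\times\Delta_I$ for \emph{every} $I\subseteq\{0,\ldots,n-1\}$ is false: when $I$ is not an interval, the generating set of $\G^+_I$ contains elements $(\tau_{i,j},\tau_{i,j}')$ with $i<j$, and these have order $2$ in \emph{both} coordinates, so no power of them separates the factors. Concretely, in rank $3$ with $I=\{0,2\}$ one has $\Lambda_I=\langle\s_1\s_2\rangle\cong C_2$ and $\Delta_I\cong C_2$, but $\G^+_I=\langle(\s_1\s_2,\s_1'\s_2')\rangle\cong C_2$, not $C_2\times C_2$. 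Moreover, the justification you sketch --- Goursat plus coprimality of $|\Lambda_I|$ and $|\Delta_I|$ --- is unsound even where the claim is true, because the orders of the section groups are not built from the $p_i$ and $q_i$ alone and need not be coprime (mix a chiral polyhedron of type $\{5,5\}$ whose group has order divisible by $3$ with the tetrahedron $\{3,3\}$, whose rotation group is $A_4$). Since your final chain of equalities also needs $\G^+_{I\cap J}=\Lambda_{I\cap J}\times\Delta_{I\cap J}$, the argument fails exactly at its load-bearing point.

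What is true, and what the proof should run on, is coprimality of the orders of the \emph{corresponding generators}: for an interval $I=\{a,\ldots,b\}$ one has $\G^+_I=\langle\beta_{a+1},\ldots,\beta_b\rangle$, and the same trick as above shows $\G^+_I=\Lambda_I\times\Delta_I$ for such $I$; then \pref{disjoint-int-prop} yields $\G^+_I\cap\G^+_J=\Lambda_{I\cap J}\times\Delta_{I\cap J}=\G^+_{I\cap J}$ whenever $I$, $J$ (hence $I\cap J$) are intervals. To finish you must either invoke the known reduction of the intersection property (\ref{eq:chiral-int}) to such segment subsets, or equivalently argue by induction on rank through facets, vertex-figures and medial sections --- the same reduction the paper uses implicitly in \cref{polyhedra} and \pref{4-polytopality}. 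Note also that the paper itself gives no proof of this proposition (it is quoted from \cite{const}), so your argument must stand alone, and as written the interval reduction, or an equivalent induction, is the missing ingredient.
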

		
		In general, when we mix $\calP$ and $\calQ$, we have to verify the full intersection property. But as we 
		shall see, some parts of the intersection property are automatic. Recall that for a subset $I$ of 
		$\{0, \ldots, n-1\}$ and a 
		rotation group $\G^+ = \langle \s_1, \ldots, \s_{n-1} \rangle$, we define 
		\[ \G^+_I = \langle \tau_{i,j} \mid i \leq j \tx{ and } i-1,j \in I \rangle, \]
		where $\tau_{i,j} = \s_i \cdots \s_j$.

		\begin{proposition}
		\label{prop:disjoint-int-prop}
		Let $\calP$ and $\calQ$ be chiral or directly regular $n$-polytopes, and let $I, J \subseteq \{0, \ldots, n-1\}$. 
		Let $\Lambda = \GP$, $\Delta = \GQ$, and $\G^+ = \Lambda \mix \Delta$.
		Then $\G^+_I \cap \G^+_J \leq \Lambda_{I \cap J} \times \Delta_{I \cap J}$.
		Furthermore, if $\G^+_I = \Lambda_I \times \Delta_I$ and $\G^+_J = \Lambda_J \times \Delta_J$,
		then $\G^+_I \cap \G^+_J = \Lambda_{I \cap J} \times \Delta_{I \cap J}$.
		\end{proposition}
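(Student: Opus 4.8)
The plan is to exploit the fact that, although the mix $\G^+ = \Lambda \mix \Delta$ need not itself satisfy the intersection property with respect to its generators $\beta_1, \ldots, \beta_{n-1}$, both factors $\Lambda = \GP$ and $\Delta = \GQ$ do (by \eref{chiral-int}), and $\G^+$ sits inside $\Lambda \times \Delta$ in a very controlled way. The key structural observation is that $\G^+ \leq \Lambda \times \Delta$ with componentwise multiplication, and $\tau_{i,j} = \beta_i \cdots \beta_j = (\s_i \cdots \s_j,\ \s_i' \cdots \s_j')$; hence each generator $\tau_{i,j}$ of $\G^+_I$ (those with $i \leq j$ and $i-1, j \in I$) has its first coordinate in $\Lambda_I$ and its second coordinate in $\Delta_I$. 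Therefore $\G^+_I \leq \Lambda_I \times \Delta_I$ --- indeed, projecting onto either factor surjects onto $\Lambda_I$ respectively $\Delta_I$, so $\G^+_I$ is a subdirect product of the two --- and likewise $\G^+_J \leq \Lambda_J \times \Delta_J$.

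For the first assertion, any $g \in \G^+_I \cap \G^+_J$ then lies in $(\Lambda_I \times \Delta_I) \cap (\Lambda_J \times \Delta_J)$, which equals $(\Lambda_I \cap \Lambda_J) \times (\Delta_I \cap \Delta_J)$ by the elementary identity $(A \times B) \cap (C \times D) = (A \cap C) \times (B \cap D)$. Applying the intersection property (\eref{chiral-int}) separately to $\Lambda$ and to $\Delta$ gives $\Lambda_I \cap \Lambda_J = \Lambda_{I \cap J}$ and $\Delta_I \cap \Delta_J = \Delta_{I \cap J}$, so $g \in \Lambda_{I \cap J} \times \Delta_{I \cap J}$, which is the desired containment. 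For the second assertion, if moreover $\G^+_I = \Lambda_I \times \Delta_I$ and $\G^+_J = \Lambda_J \times \Delta_J$, then $\G^+_I \cap \G^+_J$ equals $(\Lambda_I \times \Delta_I) \cap (\Lambda_J \times \Delta_J)$ exactly (not merely a subgroup of it), and the same computation identifies it with $\Lambda_{I \cap J} \times \Delta_{I \cap J}$.

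There is no real obstacle here; the one point that needs care is that we must never invoke an intersection property for $\G^+$ itself, which may fail, but only for its two factors $\Lambda$ and $\Delta$, the bridge between the two being the containment $\G^+_I \leq \Lambda_I \times \Delta_I$ established at the outset. The extra hypothesis $\G^+_I = \Lambda_I \times \Delta_I$ appearing in the second part is precisely the kind of condition that \pref{rel-prime-type} supplies in favorable cases, which is presumably how this proposition will be applied.
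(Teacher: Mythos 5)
Your proof is correct and follows essentially the same route as the paper: bound $\G^+_I \cap \G^+_J$ inside $(\Lambda_I \times \Delta_I) \cap (\Lambda_J \times \Delta_J)$, rewrite this as $(\Lambda_I \cap \Lambda_J) \times (\Delta_I \cap \Delta_J)$, and apply the intersection property (\eref{chiral-int}) of $\Lambda$ and $\Delta$ separately, with equality in the first step under the extra hypothesis. The only difference is that you spell out why $\G^+_I \leq \Lambda_I \times \Delta_I$ via the coordinates of the generators $\tau_{i,j}$, which the paper takes as immediate.
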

		
		\begin{proof}
		Since $\G^+_I \leq \Lambda_I \times \Delta_I$ and $\G^+_J \leq \Lambda_J \times \Delta_J$, we have
		\begin{align*}
		\G^+_I \cap \G^+_J & \leq (\Lambda_I \times \Delta_I) \cap (\Lambda_J \times \Delta_J) \\
		& = (\Lambda_I \cap \Lambda_J) \times (\Delta_I \cap \Delta_J) \\
		& = \Lambda_{I \cap J} \times \Delta_{I \cap J},
		\end{align*}
		where the last line follows from the polytopality of $\calP$ and $\calQ$. This proves the first part.
		For the second part, we note that if $\G^+_I = \Lambda_I \times \Delta_I$ and $\G^+_J = \Lambda_J \times 
		\Delta_J$, then we get equality in the first line.
		\end{proof}
		
		\begin{corollary}
		\label{cor:polyhedra}
		Let $\calP$ and $\calQ$ be chiral or directly regular polyhedra. Then $\calP \mix \calQ$ is a 
		chiral or directly regular polyhedron.
		\end{corollary}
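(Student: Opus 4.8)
The plan is to reduce everything to the intersection property \eref{chiral-int} for the rotation group $\GPQ = \GP \mix \GQ$ relative to its generators $\beta_1,\beta_2$. As noted at the beginning of this subsection, $\calP\mix\calQ$ is polytopal exactly when this property holds; and since $\calP\mix\calQ$ is already a chiral or directly regular flag-connected pre-polytope of rank $3$, once it is polytopal it is automatically a chiral or directly regular polyhedron. So it suffices to check \eref{chiral-int} for all $I,J\subseteq\{0,1,2\}$, and I expect this to follow quickly from \pref{disjoint-int-prop} by exploiting the low rank.

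Writing $\Lambda=\GP$, $\Delta=\GQ$, $\G^+=\Lambda\mix\Delta$, the first step is to dispose of the comparable pairs: if $I\subseteq J$ or $J\subseteq I$, then \eref{chiral-int} is immediate. The remaining case is $I,J$ incomparable, and here the key observation is that, since $\{0,1,2\}$ is the only subset of size $3$, incomparability forces $|I\cap J|\le 1$. The second step is the elementary remark that $\G^+_K=\{\eps\}$ for every rotation group and every $K$ with $|K|\le 1$: the condition defining the generators $\tau_{i,j}$ of $\G^+_K$ requires $i-1,j\in K$ with $i\le j$, which has no solutions when $K$ is empty, and when $K=\{k\}$ it would need $i-1=k=j$, forcing $i>j$. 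Hence $\G^+_{I\cap J}=\{\eps\}$, and likewise $\Lambda_{I\cap J}=\Delta_{I\cap J}=\{\eps\}$. The third step is then to invoke the (unconditional) first assertion of \pref{disjoint-int-prop}, which gives
\[ \G^+_I \cap \G^+_J \ \leq\ \Lambda_{I\cap J}\times\Delta_{I\cap J} \ =\ \{\eps\}, \]
so $\G^+_I\cap\G^+_J=\{\eps\}=\G^+_{I\cap J}$. Combined with the comparable case, this verifies \eref{chiral-int}.

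I do not anticipate a genuine obstacle, since no computation is involved — the case analysis simply collapses in rank $3$. The one subtlety worth care is that the proof must use only the first part of \pref{disjoint-int-prop}, not its ``furthermore'' clause: that clause needs each $\G^+_I$ to split as $\Lambda_I\times\Delta_I$, which for the two-element subsets $I$ would require the relevant periods of $\calP$ and $\calQ$ to be coprime, and so fails in general. It is also worth remarking why the shortcut is special to polyhedra: already in rank $4$ the subsets $\{0,1,2\}$ and $\{1,2,3\}$ are incomparable with $|I\cap J|=2$, so $\G^+_{I\cap J}$ need not be trivial and the containment from \pref{disjoint-int-prop} no longer forces equality.
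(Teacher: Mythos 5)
Your proof is correct and follows essentially the same route as the paper: the paper likewise reduces polytopality to the single nontrivial instance $\langle\beta_1\rangle\cap\langle\beta_2\rangle=\langle\eps\rangle$ of \eref{chiral-int} and settles it via the first part of \pref{disjoint-int-prop} with $I=\{0,1\}$, $J=\{1,2\}$. Your only addition is spelling out why all other pairs $I,J$ are harmless (comparability or triviality of $\G^+_{I\cap J}$), which the paper leaves implicit in the phrase ``the only requirement.''
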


		\begin{proof}
		In order for $\calP \mix \calQ$ to be a polyhedron (and not just a pre-polyhedron), it must satisfy the 
		intersection property. For polyhedra, the only requirement is that $\langle \beta_1 \rangle \cap 
		\langle \beta_2 \rangle = \langle \eps \rangle$, which holds by \pref{disjoint-int-prop} by taking
		$I = \{0, 1\}$ and $J = \{1, 2\}$.
		\end{proof}
		
		\cref{polyhedra} is extremely useful. In addition to telling us that the mix of any two
		polyhedra is a polyhedron, it makes it simpler to verify the polytopality of the mix of $4$-polytopes,
		since the facets and vertex-figures of the mix are guaranteed to be polytopal.
		
\section{Mixing and Duality}

	We now come to the construction of properly and improperly self-dual polytopes. Let $\calP$ be
	a chiral or directly regular polytope, with $\GP = W^+/M$. Its dual $\calPd$ has rotation group
	$\GPd = W^+/\delta(M)$. By \pref{mix}, the rotation group of $\calP \mix \calPd$ is 
	$W^+/(M \cap \delta(M))$. Then since
	\[ \delta(M \cap \delta(M)) = \delta(M) \cap \delta(\delta(M)) = \delta(M) \cap M, \]
	we see that $\calP \mix \calPd$ is properly self-dual.
	
	Similarly, suppose that $\calP$ is a chiral polytope with $\GP = W^+/M$. Then $\ch{\calPd}$, the mirror
	image of its dual, has rotation group $\G^+(\ch{\calPd}) = W^+/\ch{\delta(M)}$. Let $\calQ = \calP \mix
	\ch{\calPd}$. Then $\GQ = W^+/(M \cap \ch{\delta(M)})$. We see that
	\[ \ch{\delta(M \cap \ch{\delta(M)})} = \ch{\delta(M)} \cap \ch{\delta(\ch{\delta(M)})} = \ch{\delta(M)}
	\cap M, \]
	so $\calQ = \ch{\calQ^{\delta}}$. If $\calQ$ is directly regular, then it is (properly) self-dual.
	Otherwise, if $\calQ$ is chiral, then it is improperly self-dual.

	Under what conditions is $\calP \mix \calPd$ or $\calP \mix \ch{\calPd}$ polytopal? If $\calP$ is a polyhedron,
	then $\calP \mix \calPd$ and $\calP \mix \ch{\calPd}$ are always polytopal by \cref{polyhedra}. For 
	polytopes in ranks 4 and higher, we can try to apply the results of the previous section. For example,
	by specializing \pref{rel-prime-type}, we get the following result.
	
	\begin{proposition}
	\label{prop:rel-prime-type-dual}
	Let $\calP$ be a chiral or directly regular $n$-polytope of type $\{p_1, \ldots, p_{n-1}\}$ such that for all
	$i=1, \ldots, n-1$ we have $\gcd(p_i, p_{n-i}) = 1$. Then $\calP \mix \calP^{\delta}$ is a properly self-dual chiral or directly regular $n$-polytope of type $\{p_1 q_1, \ldots, p_{n-1} q_{n-1}\}$, and $\G^+(\calP \mix
	\calPd) = \GP \times \GPd$.
	\end{proposition}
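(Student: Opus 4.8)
The plan is to derive \pref{rel-prime-type-dual} as a direct specialization of \pref{rel-prime-type}. First I would recall that if $\calP$ has type $\{p_1, \ldots, p_{n-1}\}$, then its dual $\calPd$ has type $\{p_{n-1}, \ldots, p_1\}$; this follows from the fact that the rotation group of $\calPd$ is generated by $\s_i' = \s_{n-i}^{-1}$, so the order of $\s_{i-1}'\s_i' = \s_{n-i+1}^{-1}\s_{n-i}^{-1} = (\s_{n-i}\s_{n-i+1})^{-1}$ equals the order of $\s_{n-i}\s_{n-i+1}$, which is $p_{n-i}$. Hence, writing $q_i$ for the $i$-th entry of the Schl\"afli type of $\calPd$, we have $q_i = p_{n-i}$.

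Next I would observe that the hypothesis $\gcd(p_i, p_{n-i}) = 1$ for all $i$ is exactly the statement that $\gcd(p_i, q_i) = 1$ for all $i$, so \pref{rel-prime-type} applies verbatim with $\calQ = \calPd$. This immediately gives that $\calP \mix \calPd$ is a chiral or directly regular $n$-polytope of type $\{p_1 q_1, \ldots, p_{n-1} q_{n-1}\}$ and that $\G^+(\calP \mix \calPd) = \GP \times \GPd$. Then I would invoke the discussion at the start of Section~4 (the identity $\delta(M \cap \delta(M)) = \delta(M) \cap M$, giving $M \cap \delta(M)$ as $\delta$-invariant) to conclude that $\calP \mix \calPd$ is properly self-dual. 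Stringing these three observations together completes the proof.

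There is essentially no obstacle here; the only minor subtlety worth stating explicitly is the identification of the type of $\calPd$ (so that the reader sees why $\gcd(p_i,p_{n-i})=1$ is the right hypothesis to plug into \pref{rel-prime-type}), and the fact that the type entries $\{p_1q_1,\ldots,p_{n-1}q_{n-1}\}$ can equivalently be written $\{p_1 p_{n-1}, p_2 p_{n-2}, \ldots, p_{n-1}p_1\}$, which makes the self-duality of the type manifest. If I wanted to be careful I would note that one should double-check that $\calP$ being merely chiral or directly regular (not necessarily self-dual to begin with) is harmless: \pref{rel-prime-type} has no self-duality hypothesis, and self-duality of the mix comes for free from the $\delta$-invariance argument regardless.

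One small caveat about the statement as written: the conclusion mentions type $\{p_1 q_1, \ldots, p_{n-1}q_{n-1}\}$ but the $q_i$ are never introduced in this proposition, so in the write-up I would either replace $q_i$ by $p_{n-i}$ throughout or explicitly set $q_i := p_{n-i}$ at the outset; I would recommend the latter since it keeps the parallel with \pref{rel-prime-type} transparent.
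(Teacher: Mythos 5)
Your proposal matches the paper's own (implicit) argument: the paper states this result precisely as a specialization of \pref{rel-prime-type} to $\calQ = \calPd$, with proper self-duality following from the $\delta$-invariance of $M \cap \delta(M)$ established at the start of Section~4. Your added care in identifying the type of $\calPd$ as $\{p_{n-1},\ldots,p_1\}$ and in noting that the $q_i$ should be read as $p_{n-i}$ is a reasonable clarification but not a different route.
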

	
	This result is nice because it requires very little information about $\calP$. However,
	it is fairly restrictive. In particular, if $n$ is even, then $p_{n/2} = p_{n - n/2}$, and so the condition
	on the numbers $p_i$ is never satisfied. In this case, having certain numbers $p_i$ relatively prime
	to $p_{n-i}$ is actually an impediment to polytopality.
	
	\begin{theorem}
	Let $\calP$ be a chiral or directly regular $n$-polytope of type $\{p_1, \ldots, p_{n-1}\}$, and suppose
	that $n$ is even. Let $m = n/2$, and suppose that $p_{m-1}$ and $p_{m+1}$ are relatively prime, and
	that $p_m \geq 3$. Then $\calP \mix \calPd$ is not a polytope.
	\end{theorem}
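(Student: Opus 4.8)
The plan is to show that the rotation group $\G^+ := \GP \mix \GPd$ of $\calP \mix \calPd$ fails the intersection property \eref{chiral-int}; since $\calP \mix \calPd$ is a polytope exactly when \eref{chiral-int} holds for $\G^+$, this proves the theorem. Write $\GP = \langle \s_1, \dots, \s_{n-1}\rangle$ and $\GPd = \langle \s_1', \dots, \s_{n-1}'\rangle$ with $\s_i' = \s_{n-i}^{-1}$, so $\G^+ = \langle \beta_1, \dots, \beta_{n-1}\rangle$ where $\beta_i = (\s_i, \s_i')$; since $\calP$ has type $\{p_1, \dots, p_{n-1}\}$, the element $\s_i$ has order $p_i$ and $\s_i'$ has order $p_{n-i}$. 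As $p_{m-1}$ and $p_{m+1}$ appear in the hypotheses we have $2 \le m \le n-2$, so I take $I = \{m-2, m-1, m\}$ and $J = \{m-1, m, m+1\}$. Unwinding the definition of the subgroups $\G^+_I$ gives $\G^+_I = \langle \beta_{m-1}, \beta_m\rangle$, $\G^+_J = \langle \beta_m, \beta_{m+1}\rangle$, and $\G^+_{I \cap J} = \G^+_{\{m-1,m\}} = \langle \beta_m\rangle$, so it suffices to produce an element of $\G^+_I \cap \G^+_J$ that is not in $\langle \beta_m\rangle$.

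The witness will be $\gamma := (\eps, (\s_m')^2)$. As $\s_m'$ has order $p_m \ge 3$ we have $(\s_m')^2 \ne \eps$, while every element of $\langle \beta_m\rangle$ has the form $(\s_m^{j}, (\s_m')^{j})$, whose first coordinate is trivial only when $j \equiv 0$, forcing the second coordinate to be trivial too; hence $\gamma \notin \langle \beta_m\rangle$. It remains to check $\gamma \in \G^+_I$ and $\gamma \in \G^+_J$. I will do the first in detail; the second is obtained by the same argument with the indices $m-1$ and $m+1$ interchanged (which is legitimate because $\gcd(p_{m-1}, p_{m+1}) = \gcd(p_{m+1}, p_{m-1})$, and $\s_{m-1}'$, $\s_{m+1}'$ have orders $p_{m+1}$, $p_{m-1}$ respectively).

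For $\gamma \in \G^+_I$: note that $\G^+_I = \langle \beta_{m-1}, \beta_m\rangle \le A \times B$, where $A = \langle \s_{m-1}, \s_m\rangle \le \GP$ and $B = \langle \s_{m-1}', \s_m'\rangle \le \GPd$, and that the two coordinate projections of $\G^+_I$ are onto $A$ and $B$. Since $\s_{m-1}$ has order $p_{m-1}$, the element $\beta_{m-1}^{p_{m-1}} = (\eps, (\s_{m-1}')^{p_{m-1}})$ lies in $\G^+_I$, and because $\gcd(p_{m-1}, p_{m+1}) = 1$ while $\s_{m-1}'$ has order $p_{m+1}$, its second coordinate generates $\langle \s_{m-1}'\rangle$. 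Hence $K := \{ x \in B : (\eps, x) \in \G^+_I \}$, which is clearly a subgroup of $B$, contains $\s_{m-1}'$. A short conjugation argument — lift any $b \in B$ to some $(a,b) \in \G^+_I$ and conjugate an element $(\eps, x)$ by it, the first coordinate staying trivial — shows $K$ is a normal subgroup of $B$, so $K$ contains the entire normal closure of $\s_{m-1}'$ in $B$. Now the relation $(\s_{m-1}' \s_m')^2 = \eps$ holds in $B$, and modulo the normal closure of $\s_{m-1}'$ it collapses to $(\s_m')^2 = \eps$; therefore $(\s_m')^2$ lies in that normal closure and hence in $K$, which is precisely the statement $\gamma = (\eps, (\s_m')^2) \in \G^+_I$. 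Carrying out the mirror argument for $\G^+_J$ gives $\gamma \in \G^+_J$ as well, so $\gamma \in (\G^+_I \cap \G^+_J) \setminus \G^+_{I \cap J}$, \eref{chiral-int} fails, and $\calP \mix \calPd$ is not a polytope.

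The delicate point is the third paragraph: one must verify that $K$ is in fact a normal subgroup of $B$ (so that membership of $\s_{m-1}'$ upgrades to membership of its whole normal closure), and then that the single relation $(\s_{m-1}' \s_m')^2 = \eps$ really does drag $(\s_m')^2$ into that closure. This is also exactly where the two hypotheses are consumed: coprimality of $p_{m-1}$ and $p_{m+1}$ is what forces $\s_{m-1}'$ into $K$, and $p_m \ge 3$ is what makes the resulting element $(\s_m')^2$ nontrivial, hence a genuine obstruction to \eref{chiral-int}. The remaining steps — reading off $\G^+_I$, $\G^+_J$, $\G^+_{I\cap J}$ from the definition and checking $\gamma \notin \langle \beta_m\rangle$ — are routine bookkeeping.
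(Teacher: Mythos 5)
Your proof is correct and follows essentially the same route as the paper: the same choice of $I,J$ reducing the problem to $\langle \beta_{m-1}, \beta_m \rangle \cap \langle \beta_m, \beta_{m+1} \rangle \neq \langle \beta_m \rangle$, coprimality of $p_{m-1}$ and $p_{m+1}$ to trap a ``one-coordinate'' generator, the relation $(\s_i' \s_{i+1}')^2 = \eps$ to collapse it to a square in the middle position, and $p_m \geq 3$ to exclude that square from $\langle \beta_m \rangle$. The only differences are cosmetic: the paper's witness is the mirror element $(\s_m^2, \eps)$, obtained by the direct computation $(\beta_{m-1}^{k p_{m-1}} \beta_m)^2 = (\s_m^2, \eps)$ in place of your normal-closure and quotient argument.
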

	
	\begin{proof}
	Let $\GP = \langle \s_1, \ldots, \s_{n-1} \rangle$, $\GPd = \langle \s_1', \ldots, \s_{n-1}' \rangle$,
	and $\beta_i = (\s_i, \s_i')$ for each $i \in \{1, \ldots, n-1\}$. To show that $\calP \mix \calPd$
	is not polytopal, it suffices to show that
	\[ \langle \beta_{m-1}, \beta_m \rangle \cap \langle \beta_m, \beta_{m+1} \rangle \neq \langle \beta_m \rangle. \]
	Now, since $p_{m-1}$ and $p_{m+1}$ are relatively prime, there is an integer $k$ such that $kp_{m-1} \equiv
	1$ (mod $p_{m+1}$). Then since the order of $\s_{m-1}$ is $p_{m-1}$ and the order of $\s_{m-1}'$ is
	$p_{m+1}$, we see that
	\[ \beta_{m-1}^{kp_{m-1}} = (\s_{m-1}^{kp_{m-1}}, (\s_{m-1}')^{kp_{m-1}}) = (\eps, \s_{m-1}'), \]
	and therefore
	\[ (\beta_{m-1}^{kp_{m-1}} \beta_m)^2 = (\s_m^2, (\s_{m-1}' \s_m')^2) = (\s_m^2, \eps), \]
	since we have $(\s_i' \s_{i+1}')^2 = \eps$ for any $i \in \{1, \ldots, n-2\}$. Thus,
	$(\s_m^2, \eps) \in \langle \beta_{m-1}, \beta_m \rangle$.
	Similarly, there is an integer $k'$ such that $k'p_{m+1} \equiv 1$ (mod $p_{m-1}$), and thus
	\[ (\beta_m \beta_{m+1}^{k' p_{m+1}})^2 = (\s_m^2, (\s_m' \s_{m+1}')^2) = (\s_m^2, \eps). \]
	Therefore, $(\s_m^2, \eps) \in \langle \beta_m, \beta_{m+1} \rangle$ as well. So we see that
	\[ (\s_m^2, \eps) \in \langle \beta_{m-1}, \beta_m \rangle \cap \langle \beta_m, \beta_{m+1} \rangle.\]
	On the other hand, since the elements $\s_m$ and $\s_m'$ both have order $p_m$, which is at least $3$,
	we clearly have that $(\s_m^2, \eps) \not \in \langle \beta_m \rangle$, and that proves the claim.
	\end{proof}
	
	For example, if $\calP$ is the locally toroidal chiral polytope $\{\{6, 3\}_{(b, c)}, \{3, 5\}\}$ or
	$\{\{4, 4\}_{(b,c)}, \{4, 3\}\}$ (with $bc(b-c) \neq 0$), then $\calP \mix \calPd$ is not polytopal. 
	
	There are cases where the mix of a chiral $4$-polytope with its dual is polytopal. Here is one
	applicable result.
	
	\begin{proposition}
	\label{prop:4-polytopality}
	Let $\calP$ be a finite chiral or directly regular $4$-polytope of type $\{p, q, r\}$, with facets $\calK$
	and vertex-figures $\calL$. If $q$ is prime and if $q^2$ does not divide $|\GK \mix \G^+(\calL^{\delta})|$,
	then $\calP \mix \calPd$ is polytopal.
	\end{proposition}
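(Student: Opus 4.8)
The plan is to establish the intersection property (\eref{chiral-int}) for the rotation group $\G^+ := \GP \mix \GPd = \langle \beta_1, \beta_2, \beta_3 \rangle$, where $\beta_i = (\s_i, \s_i')$ and $\GPd = \langle \s_1', \s_2', \s_3' \rangle$; by the discussion at the start of Section~3, this is precisely what guarantees that $\calP \mix \calPd$ is a polytope and not merely a flag-connected pre-polytope. For a rank-$4$ rotation group the intersection property follows from the standard inductive reduction to the facet and vertex-figure subgroups (see \cite{chiral}): it suffices to check that (i) $\langle \beta_1, \beta_2 \rangle$ and $\langle \beta_2, \beta_3 \rangle$ each satisfy the (rank-$3$) intersection property, and that (ii) $\langle \beta_1, \beta_2 \rangle \cap \langle \beta_2, \beta_3 \rangle = \langle \beta_2 \rangle$.

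For (i), recall that the facets of $\calP$ are copies of $\calK$ and its vertex-figures are copies of $\calL$, so the facets of $\calPd$ are copies of $\calL^{\delta}$ and its vertex-figures are copies of $\calK^{\delta}$; all four of $\calK$, $\calL$, $\calK^{\delta}$, $\calL^{\delta}$ are directly regular or chiral polyhedra. Since the first coordinates of $\langle \beta_1, \beta_2 \rangle$ generate $\GK$ and the second coordinates generate the facet group $\G^+(\calL^{\delta})$ of $\calPd$, we have $\langle \beta_1, \beta_2 \rangle = \GK \mix \G^+(\calL^{\delta})$; by \cref{polyhedra} this is the rotation group of a polyhedron and so has the intersection property. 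Symmetrically, $\langle \beta_2, \beta_3 \rangle = \GL \mix \G^+(\calK^{\delta})$ is the rotation group of a polyhedron, so (i) holds.

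The crux is (ii). Applying \pref{disjoint-int-prop} with $I = \{0,1,2\}$ and $J = \{1,2,3\}$ (so $I \cap J = \{1,2\}$) gives
\[ \langle \beta_1, \beta_2 \rangle \cap \langle \beta_2, \beta_3 \rangle \;\leq\; \langle \s_2 \rangle \times \langle \s_2' \rangle. \]
Because $\calP$ has type $\{p,q,r\}$ and $\calPd$ has type $\{r,q,p\}$, both $\s_2$ and $\s_2'$ have order $q$, so $\langle \s_2 \rangle \times \langle \s_2' \rangle \cong \mathbb{Z}_q \times \mathbb{Z}_q$; it contains the diagonal cyclic subgroup $\langle \beta_2 \rangle = \{(\s_2^k, (\s_2')^k) : k \in \mathbb{Z}\}$ of order $q$. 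Since $q$ is prime, any subgroup of $\mathbb{Z}_q \times \mathbb{Z}_q$ that properly contains $\langle \beta_2 \rangle$ is the whole group, of order $q^2$. If $\langle \beta_1, \beta_2 \rangle \cap \langle \beta_2, \beta_3 \rangle$ were all of $\langle \s_2 \rangle \times \langle \s_2' \rangle$, then this group of order $q^2$ would be a subgroup of $\langle \beta_1, \beta_2 \rangle = \GK \mix \G^+(\calL^{\delta})$, forcing $q^2$ to divide $|\GK \mix \G^+(\calL^{\delta})|$ and contradicting the hypothesis. Hence $\langle \beta_1, \beta_2 \rangle \cap \langle \beta_2, \beta_3 \rangle = \langle \beta_2 \rangle$, proving (ii).

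With (i) and (ii) in hand, the inductive reduction shows that $\G^+$ satisfies the full intersection property, so $\calP \mix \calPd$ is a polytope. The step most in need of care is the appeal to the inductive reduction — verifying that in rank $4$ conditions (i) and (ii) genuinely suffice — together with the identification of the subgroup $\langle \beta_1, \beta_2 \rangle$ of $\G^+$ with the group $\GK \mix \G^+(\calL^{\delta})$ named in the hypothesis; granting \pref{disjoint-int-prop} and \cref{polyhedra}, everything else reduces to elementary facts about subgroups of $\mathbb{Z}_q \times \mathbb{Z}_q$.
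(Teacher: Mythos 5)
Your proof is correct and follows essentially the same route as the paper: apply \pref{disjoint-int-prop} to trap $\langle \beta_1, \beta_2 \rangle \cap \langle \beta_2, \beta_3 \rangle$ inside $\langle \s_2 \rangle \times \langle \s_2' \rangle$ of order $q^2$, then use the hypothesis that $q^2 \nmid |\GK \mix \G^+(\calL^{\delta})| = |\langle \beta_1, \beta_2 \rangle|$ together with primality of $q$ to force the intersection to be $\langle \beta_2 \rangle$. The only difference is that you spell out the rank-$4$ reduction (polytopality of the facet and vertex-figure groups via \cref{polyhedra} plus the middle intersection), which the paper treats as known and leaves implicit.
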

	
	\begin{proof}
	Let $\GP = \langle \s_1, \s_2, \s_3 \rangle$, $\GPd = \langle \s_1', \s_2', \s_3' \rangle$, and
	$\GP \mix \GPd = \langle \beta_1, \beta_2, \beta_3 \rangle$, where $\beta_i = (\s_i, \s_i')$. 
	Then $\calP \mix \calPd$ is polytopal if and only if $\langle \beta_1, \beta_2 \rangle \cap \langle \beta_2, 
	\beta_3 \rangle = \langle \beta_2 \rangle$. From \pref{disjoint-int-prop}, we know that
	\[ \langle \beta_1, \beta_2 \rangle \cap \langle \beta_2, \beta_3 \rangle \leq \langle \s_2 \rangle \times
	\langle \s_2' \rangle. \]
	Let $N = |\langle \beta_1, \beta_2 \rangle \cap \langle \beta_2, \beta_3 \rangle|$. Then $N$ must divide
	$|\langle \beta_1, \beta_2 \rangle|$, which is $|\GK \mix \G^+(\calL^{\delta})|$, and it also must
	divide $|\langle \s_2 \rangle \times \langle \s_2' \rangle|$, which is $q^2$. Since $q^2$ does not divide
	$|\GK \mix \G^+(\calL^{\delta})|$, we must have $N \neq q^2$. So $N$ must be a proper divisor of $q^2$.
	Since we clearly have $\beta_2 \in \langle \beta_1, \beta_2 \rangle \cap \langle \beta_2, \beta_3 \rangle$,
	we see that $N$ must be at least $q$. Therefore, since $q$ is prime, we must have $N = q$, in which case
	\[ \langle \beta_1, \beta_2 \rangle \cap \langle \beta_2, \beta_3 \rangle = \langle \beta_2 \rangle. \]
	Thus $\calP \mix \calPd$ is polytopal.
	\end{proof}

	We will see an example that uses this result in Section~6.
	
\section{Chirality of Self-Dual Mixes}

	We now set aside the question of whether $\calP \mix \calPd$ is polytopal and focus on determining
	conditions for which $\calP \mix \calPd$ is chiral. All of the results of this section can also
	be applied to $\calP \mix \ch{\calPd}$ with little or no modification.

	\begin{proposition}
	\label{prop:chiral-mix}
	Let $\calP$ be a chiral polytope and let $\calQ$ be a chiral or directly regular polytope. 
	If $\calP \mix \calQ$ is directly regular, then it covers $\calP \mix \ch{\calP}$.
	\end{proposition}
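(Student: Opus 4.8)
The plan is to push the whole statement down to the level of normal subgroups of $W^+$, where it becomes a one-line inclusion. Write $\GP = W^+/M$ and $\GQ = W^+/K$, so that $\GcP = W^+/\ch{M}$ and, by \pref{mix}, $\GPQ = W^+/(M \cap K)$ while $\G^+(\calP \mix \ch{\calP}) = W^+/(M \cap \ch{M})$. Recall also the covering criterion: for chiral or directly regular (pre-)polytopes with rotation groups $W^+/A$ and $W^+/B$, the first covers the second precisely when $A \leq B$. So it suffices to prove $M \cap K \leq M \cap \ch{M}$; since $M \cap K \leq M$ is automatic, the real task is $M \cap K \leq \ch{M}$.

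First I would record the two elementary facts about the operation $N \mapsto \ch{N}$ on normal subgroups of $W^+$ that the argument uses: because $\ch{\ch{w}} = w$ for every word $w$, the map $w \mapsto \ch{w}$ is an involutory bijection of $W^+$ (it is the automorphism of $W^+$ realizing the enantiomorphism, as in the discussion preceding the definition of $\ch{M}$), hence $\ch{\ch{N}} = N$ and, being injective, $\ch{N_1 \cap N_2} = \ch{N_1} \cap \ch{N_2}$ for all subgroups $N_1, N_2$. Next I would unpack the hypothesis: saying that the pre-polytope $\calP \mix \calQ$ is directly regular is exactly saying that the enantiomorphism induces an automorphism of its rotation group $W^+/(M \cap K)$, equivalently $\ch{M \cap K} = M \cap K$. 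Combining this with the intersection identity gives
\[ M \cap K = \ch{M \cap K} = \ch{M} \cap \ch{K} \leq \ch{M}, \]
which is the inclusion we needed. Hence $M \cap K \leq M \cap \ch{M}$, and by the covering criterion $\calP \mix \calQ$ covers $\calP \mix \ch{\calP}$.

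Since the computation itself is immediate, the points that need care are bookkeeping rather than genuine obstacles: that $N \mapsto \ch{N}$ commutes with intersection (this is what lets us pass from $\ch{M \cap K} = M \cap K$ to $M \cap K \leq \ch{M}$), and that ``directly regular'' for $\calP \mix \calQ$ translates to $\ch{M \cap K} = M \cap K$ even though $\calP \mix \calQ$ may only be a flag-connected pre-polytope. I would state both explicitly. As a sanity check, the same two facts show that $\ch{M \cap \ch{M}} = \ch{M} \cap \ch{\ch{M}} = \ch{M} \cap M$, so $\calP \mix \ch{\calP}$ is itself always directly regular, consistent with the conclusion; and nothing in the argument uses anything about $\calQ$ beyond its rotation group being a quotient of $W^+$, so the proof applies verbatim to $\calP \mix \ch{\calPd}$.
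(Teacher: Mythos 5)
Your argument is correct and is essentially identical to the paper's proof: both reduce to normal subgroups of $W^+$, use that direct regularity of $\calP \mix \calQ$ means $\ch{M \cap K} = M \cap K = \ch{M} \cap \ch{K}$, and conclude $M \cap K \leq M \cap \ch{M}$, which gives the covering. Your extra remarks (that $w \mapsto \ch{w}$ commutes with intersections and that the criterion applies to pre-polytopes) merely make explicit what the paper leaves implicit.
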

	
	\begin{proof}
	Let $\GP = W^+/M$ and let $\GQ = W^+/K$. Then $\G^+(\calP \mix \calQ) = W^+/(M \cap K)$ and 
	$\G^+(\calP \mix \ch{\calP}) = W^+/(M \cap \ch{M})$. If $\calP \mix \calQ$ is directly regular, then 
	$M \cap K = \ch{M \cap K} = \ch{M} \cap \ch{K}$. Therefore, $M \cap K \leq M \cap \ch{M}$,
	and thus $\calP \mix \calQ$ covers $\calP \mix \ch{\calP}$.
	\end{proof}
	
	\begin{proposition}
	\label{prop:self-dual-chirality}
	Let $\calP$ be a finite chiral polytope. If $|\GP \comix \GPd| > |\GP \comix \GcP|$,
	then $\calP \mix \calPd$ is chiral.
	\end{proposition}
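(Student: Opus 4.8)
The plan is a proof by contradiction: suppose $\calP \mix \calPd$ is directly regular, and deduce that $|\GP \comix \GPd| \le |\GP \comix \GcP|$, contradicting the hypothesis.

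First I would rephrase the hypothesis as a statement about the sizes of the \emph{mixes}. Write $\GP = W^+/M$, so that $\GPd = W^+/\delta(M)$ and $\GcP = W^+/\ch{M}$, and note $|\GPd| = |\GcP| = |\GP|$ (duality preserves the number of flags, and $\ch{\calP} \simeq \calP$). Applying \pref{mix-size} to the two pairs $(\calP, \calPd)$ and $(\calP, \ch{\calP})$ — all the polytopes involved being finite and chiral — yields
\[ |\GP \mix \GPd| \cdot |\GP \comix \GPd| \;=\; |\GP|^2 \;=\; |\GP \mix \GcP| \cdot |\GP \comix \GcP|. \]
Thus, since all four quantities are positive finite integers, the hypothesis $|\GP \comix \GPd| > |\GP \comix \GcP|$ is equivalent to $|\GP \mix \GPd| < |\GP \mix \GcP|$.

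Now suppose $\calP \mix \calPd$ is directly regular. Since $\calPd$ is chiral, \pref{chiral-mix} (taking $\calQ = \calPd$) tells us that $\calP \mix \calPd$ covers $\calP \mix \ch{\calP}$. By \pref{mix}, these pre-polytopes have rotation groups $W^+/(M \cap \delta(M))$ and $W^+/(M \cap \ch{M})$ respectively, so the covering is equivalent to the inclusion $M \cap \delta(M) \le M \cap \ch{M}$, which forces $|\GP \mix \GPd| \ge |\GP \mix \GcP|$. This contradicts the inequality obtained above. Hence $\calP \mix \calPd$ is not directly regular; since it is a chiral or directly regular pre-polytope, it must be chiral.

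I do not anticipate a genuine obstacle here, as the argument is essentially a size count combined with \pref{chiral-mix}. The step most worth double-checking is the invocation of \pref{mix-size}, since $\calP \mix \calPd$ need not itself be polytopal — but that proposition is a statement purely about the groups $\GP$, $\GQ$, and their mix and comix, so it applies regardless. The only other thing to handle with a little care is the translation between ``covering'' and ``inclusion of the corresponding normal subgroups of $W^+$'', which is exactly the correspondence recalled in Section~2; the remaining manipulations just use $\delta(\delta(M)) = M$ and $\ch{\ch{M}} = M$.
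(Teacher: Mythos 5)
Your proof is correct and is essentially the paper's own argument: both use \pref{mix-size} to convert the hypothesis on comix sizes into the inequality $|\GP \mix \GPd| < |\GP \mix \GcP|$, and then invoke \pref{chiral-mix} to conclude that $\calP \mix \calPd$ cannot be directly regular. Your version merely spells out the covering-versus-subgroup-inclusion step in slightly more detail.
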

	
	\begin{proof}
	If $|\GP \comix \GPd| > |\GP \comix \GcP|$, then
	$|\GP \mix \GPd)| < |\GP \mix \GcP|$, by \pref{mix-size}. In particular,
	$\calP \mix \calPd$ cannot cover $\calP \mix \ch{\calP}$, and so $\calP \mix \calPd$ is chiral by
	\pref{chiral-mix}.
	\end{proof}
	
	By taking into account the Schl\"afli symbol of $\calP$, we obtain a slightly stronger result.

	\begin{theorem}
	\label{thm:self-dual-chirality}
	Let $\calP$ be a finite chiral polytope of type $\{p_1, \ldots, p_{n-1}\}$. Define $\ell_i = 
	\lcm(p_i, p_{n-i})$ for $i = 1, \ldots, n-1$, and let $\ell = \lcm(\ell_1/p_1, \ldots, \ell_{n-1}/p_{n-1})$. 
	If 
	\[ |\GP \comix \GcP| < \ell \, |\GP \comix \GPd|, \] 
	then $\calP \mix \calPd$ is chiral.
	\end{theorem}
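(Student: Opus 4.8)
The plan is to strengthen \pref{self-dual-chirality} by replacing the crude size bound with one that exploits the fact that the facets and vertex-figures (and more generally all sections) of $\calP$ are constrained by the Schl\"afli symbol. The key observation is that $\calP \mix \ch\calP$ and $\calP \mix \calPd$ are both quotients of $W^+$ by intersections of normal subgroups ($M \cap \ch M$ and $M \cap \delta(M)$ respectively), and by \pref{mix-size} it suffices to show that $\calP \mix \calPd$ does \emph{not} cover $\calP \mix \ch\calP$; by \pref{chiral-mix}, if it fails to cover then $\calP \mix \calPd$ is chiral. So the whole argument reduces to a size comparison: if $|\G^+(\calP \mix \calPd)| < |\G^+(\calP \mix \ch\calP)|$ then we are done.

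First I would translate everything into comix language via \pref{mix-size}: writing $\GP = W^+/M$, we have $|\G^+(\calP \mix \calPd)| = |\GP|^2/|\GP \comix \GPd|$ and $|\G^+(\calP \mix \ch\calP)| = |\GP|^2/|\GP \comix \GcP|$ (using $|\GPd| = |\GcP| = |\GP|$). Hence $\calP \mix \calPd$ fails to cover $\calP \mix \ch\calP$ as soon as $|\GP \comix \GPd| < |\GP \comix \GcP|$ — this is exactly \pref{self-dual-chirality}. To get the factor of $\ell$, I would argue that $|\GP \comix \GPd|$ cannot in fact equal $|\GP \comix \GcP|$ on the nose, nor can it be only slightly smaller: there is a forced ``gap'' of size at least $\ell$. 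The mechanism: in $\GP \comix \GPd$ we identify $\s_i$ with $\s_{n-i}^{-1}$, so the image of $\s_i$ has order dividing $\gcd$-type data forcing its order to divide $\ell_i/p_i \cdot$ (something) — more precisely, the relation $\s_i = \s_{n-i}^{-1}$ together with the orders $p_i$ and $p_{n-i}$ of the two generators forces the common image to have order dividing $\gcd(p_i, p_{n-i})$, whereas in $\GP \comix \GcP$ the corresponding identification (coming from $\ch{\phantom w}$) only forces order dividing $p_i$. I would make this precise by constructing an explicit epimorphism from $\GP \comix \GPd$ onto $\GP \comix \GcP$ quotiented by a cyclic group of order $\ell_i/p_i$ in each coordinate, or more cleanly onto a direct-product-type group of order $\ell$, thereby showing $|\GP \comix \GPd|$ divides $|\GP \comix \GcP|/\ell$ — actually the inequality will go the other way, so I would instead show $\ell$ divides the index $[\, \cdot : \cdot\,]$, i.e. that $\ell \cdot |\GP \comix \GPd| \le |\GP \comix \GcP|$ whenever the two mixes have the same size would be the contrapositive; rephrasing, I would show directly that the hypothesis $|\GP \comix \GcP| < \ell\,|\GP \comix \GPd|$ forces $|\GP \comix \GPd| < |\GP \comix \GcP|$ unless $\ell = 1$, and when $\ell = 1$ the statement coincides with \pref{self-dual-chirality}. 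So the theorem splits into the trivial case $\ell = 1$ and the case $\ell > 1$, where the point is to rule out the borderline $|\GP \comix \GPd| \ge |\GP \comix \GcP|$.

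The cleanest route, and the one I would actually write, is: suppose for contradiction that $\calP \mix \calPd$ covers $\calP \mix \ch\calP$. Then $M \cap \delta(M) \le M \cap \ch M$, i.e. $M \cap \delta(M) \le \ch M$. Combined with $\delta(M \cap \delta(M)) = M \cap \delta(M)$ (shown in Section~4, which gives proper self-duality of the mix), this should force $\GP \comix \GPd$ to be a quotient of $\GP \comix \GcP$ in a way that \emph{loses} a factor of exactly the cyclic part measured by the $\ell_i/p_i$. Concretely, I would exhibit a normal subgroup of $\GP \comix \GcP$ of order at least $\ell$ lying in the kernel of the induced map $\GP \comix \GcP \twoheadrightarrow \GP \comix \GPd$: this normal subgroup is generated by the elements $\s_i^{p_i}$ (which are trivial in $\GP$ but whose images in the comix with the dual are forced to be trivial by the extra $\s_i = \s_{n-i}^{-1}$ relations interacting with $\s_{n-i}^{p_{n-i}} = \eps$), so that $\ell \mid |\GP \comix \GcP| / |\GP \comix \GPd|$, contradicting $|\GP \comix \GcP| < \ell\,|\GP \comix \GPd|$. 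Then \pref{chiral-mix} finishes it.

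The main obstacle I anticipate is pinning down exactly why the ``extra'' relations $\s_i = \s_{n-i}^{-1}$ in $\GP \comix \GPd$ — as opposed to the enantiomorphic relations $\s_1 \mapsto \s_1^{-1}$, $\s_2 \mapsto \s_1^2 \s_2$ in $\GP \comix \GcP$ — contribute precisely a factor of $\ell = \lcm_i(\ell_i/p_i)$ and not something larger or smaller, and in particular checking that these cyclic contributions in different coordinates $i$ genuinely combine via lcm rather than product (this is where the relation $(\tau_{i,j})^2 = \eps$ among the $\s$'s, hence among their images, does the bookkeeping). I expect to need a careful diagram chase comparing the two presentations $\langle \s_1,\dots,\s_{n-1} \mid R \cup \delta(R)\rangle$ and $\langle \s_1,\dots,\s_{n-1}\mid R \cup \ch R\rangle$, using that $\delta(w)$ and $\ch{\delta(w)}$ differ by conjugation by $\s_1\cdots\s_{n-1}$ (the displayed identity at the end of Section~2), which is why the two comix groups are ``close'' and the discrepancy is controlled purely by the Schl\"afli data. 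Everything else — the reductions via Propositions~\ref{prop:mix-size}, \ref{prop:chiral-mix} and the $\ell=1$ base case — is routine.
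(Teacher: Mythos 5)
Your overall skeleton matches the paper's---assume $\calP \mix \calPd$ is directly regular, invoke \pref{chiral-mix} to get a covering of $\calP \mix \ch{\calP}$, extract a factor of $\ell$ from that covering, and convert to comix sizes via \pref{mix-size}---but the mechanism you propose for producing the factor $\ell$ is placed in the wrong groups, and as written it fails. Your ``cleanest route'' asks for a normal subgroup of order at least $\ell$ inside $\GP \comix \GcP$, generated by the elements $\s_i^{p_i}$ and lying in the kernel of an ``induced map'' $\GP \comix \GcP \twoheadrightarrow \GP \comix \GPd$. Two problems. First, $\GP \comix \GcP = W^+/M\ch{M}$ is a quotient of $\GP = W^+/M$, so the images of the $\s_i^{p_i}$ there are already trivial; they cannot supply elements of order $\ell_i/p_i$, and indeed in both comix groups the generator orders only shrink (in $\GP \comix \GPd$ the order of $\s_i$ divides $\gcd(p_i,p_{n-i})$, as you observe). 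Second, the covering hypothesis says $M \cap \delta(M) \leq M \cap \ch{M}$, which yields an epimorphism between the \emph{mixes}, $W^+/(M\cap\delta(M)) \to W^+/(M\cap\ch{M})$; it does not yield, and you do not justify, a surjection $\GP \comix \GcP \to \GP \comix \GPd$, which would require $\ch{M} \leq M\delta(M)$. Relatedly, your intermediate plan to show that the hypothesis forces $|\GP \comix \GPd| < |\GP \comix \GcP|$ ``unless $\ell=1$'' cannot work: when $\ell > 1$ the theorem's hypothesis is strictly weaker than that of \pref{self-dual-chirality}, so it cannot imply it.

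The elements of order $\ell_i/p_i$ live in the mix, not the comix, and that is where the paper's proof runs. Since $\s_i$ has order $p_i$ and the corresponding generator of $\GPd$ has order $p_{n-i}$, the generator $\beta_i$ of $\GP \mix \GPd$ has order $\ell_i = \lcm(p_i,p_{n-i})$, i.e.\ $\calP \mix \calPd$ has type $\{\ell_1,\ldots,\ell_{n-1}\}$, while $\calP \mix \ch{\calP}$ has type $\{p_1,\ldots,p_{n-1}\}$. Hence for the covering epimorphism $\pi : \GP \mix \GPd \to \GP \mix \GcP$ supplied by \pref{chiral-mix}, each $\beta_i^{p_i}$ lies in $\ker\pi$ and has order $\ell_i/p_i$ in $\GP \mix \GPd$; by Lagrange, $|\ker\pi|$ is divisible by every $\ell_i/p_i$, hence by their least common multiple $\ell$---no ``lcm versus product'' bookkeeping and no diagram chase with the identity relating $\delta(\ch{w})$ and $\ch{\delta(w)}$ is needed. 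Then $|\GP \mix \GPd| = |\ker\pi|\,|\GP \mix \GcP|$, and \pref{mix-size} (using $|\GPd| = |\GcP| = |\GP|$ and finiteness of $\calP$) converts this into $|\GP \comix \GcP| = |\ker\pi|\,|\GP \comix \GPd| \geq \ell\,|\GP \comix \GPd|$, contradicting the hypothesis. You cite the right ingredients (the two Schl\"afli types and the quantities $\ell_i/p_i$), but the step that actually produces the factor $\ell$ would not go through as you have set it up.
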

	
	\begin{proof}
	Suppose $\calP \mix \calPd$ is directly regular. Then $\GP \mix \GPd$ covers $\GP \mix 
	\GcP$, by \pref{chiral-mix}. Let $\pi$ be the corresponding natural epimorphism. 
	Now, $\calP \mix \calPd$ is of type $\{\ell_1, \ldots, 
	\ell_{n-1}\}$, while $\calP \mix \ch{\calP}$ is of type $\{p_1, \ldots, p_{n-1}\}$. Let $\G^+(\calP \mix \calPd)
	= \langle \s_1, \ldots, \s_{n-1} \rangle$. Then we have that $\s_i^{p_i} \in \ker \pi$ for each $i = 1, 
	\ldots, n-1$. So $\langle \s_1^{p_1}, \ldots, \s_{n-1}^{p_{n-1}} \rangle \leq \ker \pi$. Now, the order of 
	$\s_i^{p_i}$ in $\G^+(\calP \mix \calPd)$ is $\ell_i / p_i$ since the order of $\s_i$ is $\ell_i$ and $p_i$ 
	divides $\ell_i$. Then $\ker \pi$ contains elements of order $\ell_i / p_i$ for $i = 1, \ldots, n-1$, and thus 
	it has size at least $\ell = \lcm(\ell_1/p_1, \ldots, \ell_{n-1}/p_{n-1})$. Now, we have that $|\GP \mix \GPd|
	= |\ker \pi||\GP \mix \GcP|$, and therefore 
	\[ |\GP \comix \GcP| = |\ker \pi| |\GP \comix \GPd| \geq \ell \, |\GP \comix \GPd|, \]
	proving the desired result.
	\end{proof}

	Finally, we establish a result that relies on the fact that $\ch{\calP^{\delta}} = \ch{\calP}^{\delta}$.
	
	\begin{theorem}
	\label{thm:big-chirality}
	Let $\calP$ be a finite chiral polytope, and suppose that
	\[ \left( \frac{|\GP \mix \GcP|}{|\GP|} \right)^2 > \left| \left(\GP \mix \GcP 
	\vphantom{\G^+(\ch{\calP}^{\delta})} \right) \comix 
	\left(\GPd \mix \G^+(\ch{\calP}^{\delta})\right) \right|. \]
	Then $\calP \mix \calPd$ is chiral.
	\end{theorem}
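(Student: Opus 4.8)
The plan is to show that if $\calP \mix \calPd$ is directly regular, then the displayed inequality must fail, so that chirality follows by contraposition. The key device is to pass from the ``inner'' mix $\calR := \calP \mix \ch{\calP}$ to an ``outer'' mix built from $\calR$ and its dual. First I would set $\GP = W^+/M$, so that $\GcP = W^+/\ch M$, $\GPd = W^+/\delta(M)$, and $\G^+(\ch{\calP}^\delta) = \G^+(\ch{\calP^\delta}) = W^+/\ch{\delta(M)}$ (using the identification from Section~2 that $\ch{\calP^\delta} = \ch\calP^\delta$). By \pref{mix}, $\GR = W^+/(M \cap \ch M)$ and $\G^+(\calPd \mix \ch{\calP^\delta}) = W^+/(\delta(M) \cap \ch{\delta(M)}) = W^+/\delta(M \cap \ch M)$; that is, $\calPd \mix \ch{\calP^\delta} = \calR^\delta$. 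So the right-hand side of the hypothesis is exactly $|\GR \comix \GR^\delta|$, which by \pref{mix-size} equals $|\GR|^2 / |\GR \mix \GR^\delta| = |\GR|^2/|\G^+((\calP \mix \ch\calP) \mix (\calPd \mix \ch{\calP^\delta}))|$.

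Next I would unwind the iterated mix on the left. The mix operation on rotation groups corresponds to intersecting the normal subgroups $M, \ch M, \delta(M), \ch{\delta(M)}$ of $W^+$, and intersection is associative and commutative, so
\[ \G^+\bigl((\calP \mix \ch\calP) \mix (\calPd \mix \ch{\calP^\delta})\bigr) = W^+/\bigl(M \cap \ch M \cap \delta(M) \cap \ch{\delta(M)}\bigr). \]
Now suppose $\calP \mix \calPd$ is directly regular. From Section~4 we know $\calP \mix \calPd$ is properly self-dual, so its defining subgroup $M \cap \delta(M)$ satisfies $\delta(M \cap \delta(M)) = M \cap \delta(M)$; direct regularity gives $\ch{M \cap \delta(M)} = M \cap \delta(M)$ as well. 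Hence $M \cap \delta(M)$ is invariant under both bar and $\delta$, which forces $M \cap \delta(M) \leq \ch M$ and $M \cap \delta(M) \leq \ch{\delta(M)}$, so
\[ M \cap \ch M \cap \delta(M) \cap \ch{\delta(M)} = M \cap \delta(M) = \G^+\text{-subgroup of } \calP \mix \calPd. \]
Therefore $|\G^+((\calP\mix\ch\calP)\mix(\calPd\mix\ch{\calP^\delta}))| = |\GPQ|$ where $\calQ = \calPd$; i.e.\ it equals $|\GP \mix \GPd|$, which by \pref{chiral-mix} and the cover $\calP\mix\calPd \to \calP \mix \ch\calP$ is divisible by — in fact, I claim, bounded below by — a quantity I can compare to $(|\GR|/|\GP|)^2$.

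The last step is the arithmetic comparison. Under the direct-regularity assumption the right-hand side of the hypothesis becomes $|\GR|^2/|\GP \mix \GPd|$, and since $\calP \mix \calPd$ covers $\calR = \calP \mix \ch\calP$ we have $|\GP \mix \GPd| \geq |\GR|$; moreover $|\GP \mix \GPd| \leq |\GP|\cdot|\GPd| = |\GP|^2$ by the trivial bound on a mix. Combining, $|\GR|^2/|\GP\mix\GPd| \geq |\GR|^2/|\GP|^2 = (|\GR|/|\GP|)^2 = (|\GP\mix\GcP|/|\GP|)^2$, which contradicts the strict inequality in the hypothesis. Hence $\calP \mix \calPd$ cannot be directly regular; being properly self-dual and (by hypothesis implicitly, or by the ambient assumptions) a chiral-or-directly-regular pre-polytope, it must be chiral. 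The main obstacle I anticipate is the middle step: carefully justifying that the fourfold intersection collapses to $M \cap \delta(M)$, i.e.\ extracting from ``$\calP\mix\calPd$ directly regular and self-dual'' the two containments $M\cap\delta(M)\subseteq \ch M$ and $M\cap\delta(M)\subseteq\ch{\delta(M)}$ — this is where the identity $\ch{\calP^\delta}=\ch\calP^\delta$ and the normality of $M$ in $W^+$ do the real work, and I would want to check the bar/$\delta$ bookkeeping there with some care.
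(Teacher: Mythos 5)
Your proposal is correct and follows essentially the same route as the paper: assume $\calP \mix \calPd$ is directly regular, observe that the fourfold mix $(\calP \mix \ch{\calP}) \mix (\calPd \mix \ch{\calP}^{\delta})$ then collapses to $\calP \mix \calPd$, apply \pref{mix-size} together with $|\GPd \mix \G^+(\ch{\calP}^{\delta})| = |\GP \mix \GcP|$, and bound $|\GP \mix \GPd| \leq |\GP|^2$ to contradict the hypothesis. The only differences are cosmetic: you justify the collapse explicitly at the level of the normal subgroups $M, \ch{M}, \delta(M), \ch{\delta(M)}$ of $W^+$ (where in fact bar-invariance of $M \cap \delta(M)$ alone suffices, the $\delta$-invariance being unnecessary), and your appeal to the covering $\calP \mix \calPd \to \calP \mix \ch{\calP}$ is superfluous to the final estimate.
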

	
	\begin{proof}
	Suppose that $\calP \mix \calPd$ is directly regular. Then $(\calP \mix \calPd) \mix (\ch{\calP} \mix \ch{\calPd}) = \calP \mix \calPd$. Now, we have that
	\begin{align*}
	|\GP \mix \GPd| &= |\GP \mix \GPd \mix \GcP \mix \G^+(\ch{\calP^{\delta}})| \\
	&= |(\GP \mix \GcP) \mix (\GPd \mix \G^+(\ch{\calP}^{\delta}))| \\
	&= \frac{|\GP \mix \GcP| |\GPd \mix \G^+(\ch{\calP}^{\delta})|}{|(\GP \mix \GcP) \comix (\GPd \mix \G^+(\ch{\calP}^{\delta}))|} \\
	&= \frac{|\GP \mix \GcP|^2}{|(\GP \mix \GcP) \comix (\GPd \mix \G^+(\ch{\calP}^{\delta}))|},
	\end{align*}
	where the third line follows from \pref{mix-size}.
	Rearranging, we get that
	\begin{align*}
	|(\GP \mix \GcP) \comix (\GPd \mix \G^+(\ch{\calP}^{\delta}))| &= \frac{|\GP \mix \GcP|^2}{|\GP \mix \GPd|} \\
	& \geq \frac{|\GP \mix \GcP|^2}{|\GP|^2},
	\end{align*}
	and the result follows.
	\end{proof}
	
	\begin{corollary}
	\label{cor:big-chirality}
	Let $\calP$ be a chiral polytope of type $\{p_1, \ldots, p_{n-1}\}$, and suppose that
	\[ \left( \frac{|\GP \mix \GcP|}{|\GP|} \right)^2 > |[p_1, \ldots, p_{n-1}]^+ \comix [p_{n-1}, \ldots, p_1]^+|. \]
	Then $\calP \mix \calPd$ is chiral.
	\end{corollary}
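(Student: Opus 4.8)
The plan is to deduce this from \tref{big-chirality} by bounding from above the right-hand side of that theorem's hypothesis. Concretely, it suffices to prove
\[ \left| \left(\GP \mix \GcP \right) \comix \left(\GPd \mix \G^+(\ch{\calP}^{\delta})\right) \right| \leq \left| [p_1, \ldots, p_{n-1}]^+ \comix [p_{n-1}, \ldots, p_1]^+ \right|, \]
for then the hypothesis of the corollary implies the hypothesis of \tref{big-chirality}, and so $\calP \mix \calPd$ is chiral. (If the right-hand side is infinite there is nothing to prove, since $\calP$ is finite and hence the left-hand side of the corollary's hypothesis is finite.)

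The first ingredient is the elementary fact that the comix turns quotients into quotients: if $G$ is a quotient of $\widetilde{G}$ and $H$ is a quotient of $\widetilde{H}$, compatibly with the specified generators, then $G \comix H$ is a quotient of $\widetilde{G} \comix \widetilde{H}$. This is immediate from the definition of the comix, since a presentation of $G \comix H$ is obtained from one of $\widetilde{G} \comix \widetilde{H}$ by adjoining the extra relators that cut $\widetilde{G}$ down to $G$ and $\widetilde{H}$ down to $H$. So it is enough to exhibit $\GP \mix \GcP$ as a quotient of $[p_1, \ldots, p_{n-1}]^+$ and $\GPd \mix \G^+(\ch{\calP}^{\delta})$ as a quotient of $[p_{n-1}, \ldots, p_1]^+$, in each case compatibly with the standard generators $\s_1, \ldots, \s_{n-1}$ of $W^+$.

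For the first, write $\GP = W^+/M$, so that $\GcP = W^+/\ch{M}$ and, by \pref{mix}, $\GP \mix \GcP = W^+/(M \cap \ch{M})$. Both $\calP$ and $\ch{\calP}$ have type $\{p_1, \ldots, p_{n-1}\}$, so $\s_i$ has order $p_i$ in each of $W^+/M$ and $W^+/\ch{M}$; hence $\s_i^{p_i} \in M \cap \ch{M}$ for every $i$, and since $M \cap \ch{M}$ is normal in $W^+$ we get $\langle\langle \s_1^{p_1}, \ldots, \s_{n-1}^{p_{n-1}} \rangle\rangle \leq M \cap \ch{M}$. As $[p_1, \ldots, p_{n-1}]^+ = W^+ / \langle\langle \s_1^{p_1}, \ldots, \s_{n-1}^{p_{n-1}} \rangle\rangle$, this exhibits $\GP \mix \GcP$ as a quotient of $[p_1, \ldots, p_{n-1}]^+$. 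For the second, recall that $\GPd = W^+/\delta(M)$, and that $\G^+(\ch{\calP}^{\delta}) = W^+/\ch{\delta(M)}$ since $\ch{\calP}^{\delta} \simeq \ch{\calP^{\delta}}$; so by \pref{mix} again $\GPd \mix \G^+(\ch{\calP}^{\delta}) = W^+/(\delta(M) \cap \ch{\delta(M)})$. Since $\calPd$, and likewise $\ch{\calP}^{\delta} \simeq \calPd$, has type $\{p_{n-1}, \ldots, p_1\}$, the same reasoning (now with $\s_i$ of order $p_{n-i}$) gives $\langle\langle \s_1^{p_{n-1}}, \ldots, \s_{n-1}^{p_1} \rangle\rangle \leq \delta(M) \cap \ch{\delta(M)}$, which exhibits $\GPd \mix \G^+(\ch{\calP}^{\delta})$ as a quotient of $[p_{n-1}, \ldots, p_1]^+ = W^+/\langle\langle \s_1^{p_{n-1}}, \ldots, \s_{n-1}^{p_1} \rangle\rangle$. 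Combining the two paragraphs yields the displayed inequality, and the corollary follows.

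The argument is essentially bookkeeping with normal subgroups of $W^+$; the one point that warrants care is the dual side, where the generators must be matched correctly so that $\GPd \mix \G^+(\ch{\calP}^{\delta})$ is recognized as a quotient of the universal rotation group of the \emph{reversed} type $\{p_{n-1}, \ldots, p_1\}$ rather than of $\{p_1, \ldots, p_{n-1}\}$.
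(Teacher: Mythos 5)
Your proposal is correct and follows essentially the same route as the paper: the paper likewise notes that $\calP \mix \ch{\calP}$ has type $\{p_1, \ldots, p_{n-1}\}$ and $\calPd \mix \ch{\calP}^{\delta}$ has type $\{p_{n-1}, \ldots, p_1\}$, concludes that $(\GP \mix \GcP) \comix (\GPd \mix \G^+(\ch{\calP}^{\delta}))$ is a quotient of $[p_1, \ldots, p_{n-1}]^+ \comix [p_{n-1}, \ldots, p_1]^+$, and then invokes \tref{big-chirality}. You simply spell out the quotient argument (via the normal subgroups of $W^+$ and the compatibility of comix with quotients) that the paper leaves implicit.
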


	\begin{proof}
	Since $\calP$ is of type $\{p_1, \ldots, p_{n-1}\}$, so are $\ch{\calP}$ and $\calP \mix \ch{\calP}$.
	Similarly, $\calPd \mix \ch{\calP}^{\delta}$ is of type $\{p_{n-1}, \ldots, p_1\}$. 
	Therefore, $(\GP \mix \GcP) \comix (\GPd \mix \G^+(\ch{\calP}^{\delta}))$ is a quotient of
	$[p_1, \ldots, p_{n-1}]^+ \comix [p_{n-1}, \ldots, p_1]^+$, and the result follows from \tref{big-chirality}.
	\end{proof}
	
	We now look at a few broad classes of examples where $\calP \mix \calPd$ is guaranteed to be a chiral,
	self-dual polytope.
	
	\begin{theorem}
	Let $\calP$ be a finite chiral polyhedron of type $\{p, q\}$. Let $\ell_1 = \lcm(p,q)$, and suppose that 
	$|\GP \comix \GcP| < \ell_1^2 / pq \, |\GP \comix \GPd|$. Then $\calP \mix \calPd$ is a properly self-dual chiral 
	polyhedron of type $\{\ell_1, \ell_1\}$.
	\end{theorem}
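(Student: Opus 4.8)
The plan is to assemble the statement from results already established, in four steps, only the last of which requires any computation. Since $\calP$ is a polyhedron, \cref{polyhedra} tells us at once that $\calP \mix \calPd$ is a chiral or directly regular polyhedron, so polytopality needs no separate argument. The discussion opening Section~4 gives proper self-duality: writing $\GP = W^+/M$, we have $\G^+(\calP \mix \calPd) = W^+/(M \cap \delta(M))$, and since $\delta(M \cap \delta(M)) = \delta(M) \cap M = M \cap \delta(M)$, the dual has the same rotation-group presentation.

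Next I would pin down the Schl\"afli type. As $\calP$ has type $\{p,q\}$, its dual $\calPd$ has type $\{q,p\}$; hence inside $\GP \mix \GPd \leq \GP \times \GPd$ the generator $\beta_1 = (\s_1,\s_1')$ has order $\lcm(p,q) = \ell_1$ and $\beta_2 = (\s_2,\s_2')$ has order $\lcm(q,p) = \ell_1$, so $\calP \mix \calPd$ is of type $\{\ell_1,\ell_1\}$.

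The remaining point is chirality, which I would obtain from \tref{self-dual-chirality} with $n=3$. There $p_1 = p$ and $p_2 = q$, so $\ell_1 = \lcm(p_1,p_2) = \lcm(p,q)$ and $\ell_2 = \lcm(p_2,p_1) = \lcm(p,q) = \ell_1$, and the quantity denoted $\ell$ in that theorem is $\ell = \lcm(\ell_1/p, \ell_1/q)$. Setting $d = \gcd(p,q)$, so that $\ell_1 = pq/d$, we get $\ell_1/p = q/d$ and $\ell_1/q = p/d$ with $\gcd(q/d, p/d) = 1$, whence $\ell = (p/d)(q/d) = pq/d^2 = \ell_1^2/(pq)$. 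Thus the hypothesis $|\GP \comix \GcP| < (\ell_1^2/pq)\,|\GP \comix \GPd|$ is precisely the hypothesis of \tref{self-dual-chirality}, which then forces $\calP \mix \calPd$ to be chiral; combining this with the previous steps yields the theorem.

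I do not expect a serious obstacle here: the argument is essentially bookkeeping, and the one genuine computation — the elementary identity $\lcm(\ell_1/p, \ell_1/q) = \ell_1^2/(pq)$, resting on $\gcd(\ell_1/p, \ell_1/q) = 1$ — is routine. The only subtlety is that \tref{self-dual-chirality} is phrased with its hypotheses on $\calP$ rather than on $\calP \mix \calPd$, so one must match its $\ell$ against the factor $\ell_1^2/pq$ of the present statement, which is exactly what the $\gcd$/$\lcm$ computation does.
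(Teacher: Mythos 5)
Your proposal is correct and follows essentially the same route as the paper: \cref{polyhedra} for polytopality, the Section~4 quotient argument ($\delta(M\cap\delta(M)) = M\cap\delta(M)$) for proper self-duality, and \tref{self-dual-chirality} with the computation $\ell = \lcm(\ell_1/p,\ell_1/q) = \ell_1^2/(pq)$ for chirality. The paper's proof is just terser, leaving the self-duality, the type $\{\ell_1,\ell_1\}$, and the gcd/lcm identity implicit, all of which you verify correctly.
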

	
	\begin{proof}
	From Corollary~\ref{cor:polyhedra}, we know that $\calP \mix \calPd$ is a chiral or directly regular polyhedron. 
	Now, we apply \tref{self-dual-chirality}. We have that $\ell = \lcm(\ell_1/p, \ell_1/q) = \ell_1^2/pq$, and therefore, $\calP \mix \calPd$ is chiral.
	\end{proof}
	
	\begin{theorem}
	Let $\calP$ be a finite chiral polytope of odd rank of type $\{p_1, \ldots, p_{n-1}\}$. Suppose 
	$\gcd(p_i, p_{n-i}) = 1$ for $i = 1, \ldots, n-1$, and suppose that $|\GP \comix \GcP| < 
	\lcm(p_1, \ldots, p_{n-1})$. Then $\calP \mix \calPd$ is a properly self-dual chiral polytope of type 
	$\{p_1 p_{n-1}, p_2 p_{n-2}, \ldots, p_{n-1} p_1\}$, and with group $\GP \times \GPd$.
	\end{theorem}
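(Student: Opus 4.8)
The plan is to assemble two results already established in the paper: \pref{rel-prime-type-dual} for polytopality, self-duality, the Schl\"afli type, and the group structure, and \tref{self-dual-chirality} to rule out direct regularity. First I would observe that since the rank $n$ is odd there is no index $i$ with $i = n-i$, so the hypothesis $\gcd(p_i, p_{n-i}) = 1$ is self-consistent and meaningful for every $i = 1, \dots, n-1$. The dual $\calPd$ has type $\{p_{n-1}, \dots, p_1\}$, so that the $i$-th entry of its Schl\"afli symbol is $p_{n-i}$; hence the coprimality hypothesis is precisely the hypothesis $\gcd(p_i, p_{n-i}) = 1$ of \pref{rel-prime-type-dual}. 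That proposition then gives at once that $\calP \mix \calPd$ is a properly self-dual chiral or directly regular $n$-polytope of type $\{p_1 p_{n-1}, p_2 p_{n-2}, \dots, p_{n-1} p_1\}$ with rotation group $\GP \times \GPd$. Thus polytopality, self-duality, the type, and the group are all settled, and the only remaining task is to show that the "directly regular" alternative does not occur.

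For this I would invoke \tref{self-dual-chirality} with the specific arithmetic of the present situation. Writing $\ell_i = \lcm(p_i, p_{n-i})$, the coprimality hypothesis gives $\ell_i = p_i p_{n-i}$, hence $\ell_i / p_i = p_{n-i}$, and therefore
\[ \ell = \lcm\!\left( \ell_1/p_1, \dots, \ell_{n-1}/p_{n-1} \right) = \lcm(p_{n-1}, p_{n-2}, \dots, p_1) = \lcm(p_1, \dots, p_{n-1}). \]
Since $\calP$ is finite, $\GP$ and $\GPd$ are finite, so $\GP \comix \GPd$ is a finite group and $|\GP \comix \GPd| \geq 1$. Combining this with the hypothesis $|\GP \comix \GcP| < \lcm(p_1, \dots, p_{n-1})$ yields
\[ |\GP \comix \GcP| < \lcm(p_1, \dots, p_{n-1}) = \ell \leq \ell \, |\GP \comix \GPd|, \]
which is exactly the inequality required by \tref{self-dual-chirality}. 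Therefore $\calP \mix \calPd$ is chiral, i.e.\ not directly regular, and together with the previous paragraph this proves that $\calP \mix \calPd$ is a properly self-dual chiral polytope of the asserted type with group $\GP \times \GPd$.

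I expect no genuine obstacle here: the argument is essentially a matter of correctly feeding the hypotheses into \pref{rel-prime-type-dual} and \tref{self-dual-chirality}. The only points demanding a little care are the bookkeeping that the $i$-th entry of the type of $\calPd$ is $p_{n-i}$ (not $p_i$), which is what makes the coprimality hypothesis match \pref{rel-prime-type-dual}, and the remark that $|\GP \comix \GPd| \geq 1$ by finiteness of $\calP$, so that the strict inequality survives multiplication by $|\GP \comix \GPd|$ and the hypothesis of \tref{self-dual-chirality} is met.
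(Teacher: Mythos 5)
Your proposal is correct and follows essentially the same route as the paper: \pref{rel-prime-type-dual} for polytopality, type, and group, then \tref{self-dual-chirality} with $\ell_i = p_i p_{n-i}$ and $\ell = \lcm(p_1,\ldots,p_{n-1})$ for chirality. The only cosmetic difference is that the paper notes $|\GP \comix \GPd| = 1$ (forced by the coprimality of the $p_i$ and $p_{n-i}$), whereas you get by with the weaker observation $|\GP \comix \GPd| \geq 1$, which suffices equally well.
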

	
	\begin{proof}
	With the given conditions, Proposition~\ref{prop:rel-prime-type-dual} applies to show us that $\calP \mix 
	\calPd$ is a polytope with group $\GP \times \GPd$. To prove chirality, we apply 
	\tref{self-dual-chirality}, noting that $\ell_i = p_i p_{n-i}$, $\ell = \lcm(p_1, \ldots, p_{n-1})$,
	and $|\GP \comix \GPd| = 1$.
	\end{proof}
	
\section{Self-dual Chiral Polytopes}

	Now we will apply the results of the preceding sections to build some concrete examples of self-dual
	chiral polytopes.
	
	If $\calP$ is a chiral polytope with simple automorphism group, then $\GP \comix \GcP$ is trivial
	\cite{const}. If $\calP$ is not already self-dual, then $\GP \comix \GPd$ is not
	trivial, and therefore $\calP \mix \calPd$ must be chiral by \pref{self-dual-chirality}. The question
	of polytopality of $\calP \mix \calPd$ must still be addressed, but if $\calP$ is a polyhedron, for example, then
	polytopality follows from \cref{polyhedra}. There are many examples of such polyhedra; for example,
	in \cite{aj}, the authors give several examples of chiral polyhedra whose automorphism group is
	the Mathieu group $M_{11}$.

	Next we consider the simplest chiral polyhedra: the torus maps. Since the torus map $\{4, 4\}_{(b, c)}$ is
	already (improperly) self-dual, we work only with $\{3, 6\}_{(b,c)}$ and its dual. Let $\calP = \{3, 6\}_{(b,c)}$,
	where $m := b^2 + bc + c^2$ is a prime and $m \geq 5$. (The primality of $m$ is not essential, but it makes
	some of our calculations easier.) We have that $|\GP| = 6m$ and $|\GP \mix \GcP| = 6m^2$ \cite{cox-index}.
	Now, the dual of $\calP$ is $\{6, 3\}_{(b,c)}$, so $\calP \comix \calPd$ is a quotient of $\{3, 3\}$. 
	This is already enough to conclude that $\calP \mix \calPd$ is chiral (using \cref{big-chirality}),
	but we also want to determine the full structure of $\calP \mix \calPd$, so we need to calculate the size
	of $\GP \comix \GPd$ directly. Since $m$ is prime, $b$ and $c$ must be coprime, and in particular, at least
	one of them must be odd. We can assume that $b$ is odd by changing from $\calP = \{3, 6\}_{(b,c)}$ to
	$\ch{\calP} = \{3, 6\}_{(c, b)}$ if necessary. Now, in $\GP \comix \GPd$, we have the relation
	\[ (\s_1 \s_2^{-1} \s_1^{-1} \s_2)^b (\s_2 \s_1 \s_2^{-1} \s_1^{-1})^c = \eps. \]
	Using the facts that $(\s_1 \s_2)^2 = \s_1^3 = \s_2^3 = \eps$ and that $b$ is odd, we can conclude that
	\[ \s_2 \s_1 (\s_2 \s_1^{-1} \s_2)^c = \eps. \]
	Conjugating both sides by $\s_2$ and making a few more easy reductions, we get that either
	\begin{align*} 
	\s_2^{-1} \s_1 \s_2 \s_1^{-1} &= \eps \tx{ if $c$ is odd,} \\
	\s_2^{-1} \s_1 \s_2^{-1} &= \eps \tx{ if $c$ is even.}
	\end{align*}
	In the first case, we see that $\s_1 \s_2 = \s_2 \s_1$, and since we also have $(\s_1 \s_2)^2 = \eps$, we
	see that $\s_1 = \s_2^{-1}$. In the second case, we also directly get that $\s_1 = \s_2^{-1}$, and therefore
	$\s_1 \s_2 = \s_2 \s_1$. In any case, the extra relation from $\{6, 3\}_{(b,c)}$ is rendered redundant,
	and we see that $\GP \comix \GPd$ has order $3$.
	
	We can now determine the full structure of $\calP \mix \calPd$. We have that $|\GP \mix \GPd| = |\GP|^2 / 3 =
	12m^2$, and therefore, $\calP \mix \calPd$ has $24m^2$ flags. Since $\calP \mix \calPd$ is of type $\{6, 6\}$,
	it must have $2m^2$ vertices, $6m^2$ edges, and $2m^2$ $2$-faces. 
	
	The previous analysis also works for $\calP 
	\mix \ch{\calPd}$, and we get an improperly self-dual chiral
	polytope with the same number of flags, vertices, etc. as $\calP \mix \calPd$.

	Finally, we present an example of a chiral $4$-polytope that we can self-dualize.
	Let $\calP$ be the polytope of type $\{\{6, 3\}_{(b, c)}, \{3, 3\}\}$ with group $L_2(m)$ $(= PSL(2, m))$, where
	$m = b^2 + bc + c^2$ is prime and $m \equiv 1$ (mod 12) \cite{SW2}. First, we want to show that $\calP \mix 
	\calPd$ is polytopal. By \pref{4-polytopality}, it suffices to show that $9$ does not divide
	$|[6, 3]^+_{(b, c)} \mix [3, 3]^+|$. The argument used above to show that $|[6, 3]^+_{(b, c)} \comix
	[3, 6]^+_{(b, c)}| = 3$ can be applied here to show that $|[6, 3]^+_{(b, c)} \comix [3, 3]^+| = 3$ as well.
	Then 
	\[ |[6, 3]^+_{(b, c)} \mix [3, 3]^+| = |[6, 3]^+_{(b, c)}| \cdot |[3, 3]^+| / 3 = 24m. \]
	Since $m$ is a prime and $m \neq 3$, $9$ does not divide $24m$, and thus $\calP \mix \calPd$ is polytopal.
	
	To show that $\calP \mix \calPd$ is chiral, it suffices to show that the facets $\{6, 3\}_{(b, c)} \mix
	\{3, 3\}$ are chiral. As mentioned above, $|[6, 3]^+_{(b, c)} \mix [6, 3]^+_{(c, b)}| = 6m^2$. If
	$\{6, 3\}_{(b, c)} \mix \{3, 3\}$ is directly regular, it must cover $|[6, 3]^+_{(b, c)} \mix [6, 3]^+_{(c, b)}$,
	by \pref{chiral-mix}. This can only happen if $6m^2$ divides $24m$, which does not happen for $m > 4$. Thus
	we see that the facets of $\calP \mix \calPd$ are chiral, and therefore, so is the whole polytope.
	
	Since $|[6, 3]^+_{(b, c)} \mix [3, 3]^+| = 24m$, we see that the facets of $\calP \mix \calPd$ are of
	type $\{6, 3\}$ with $48m$ flags, and thus the facets have $8m$ vertices, $12m$ edges, and $4m$ $2$-faces.
	Therefore, the facets have Euler characteristic $0$ and so they are torus maps; in fact, the
	facets are $\{6, 3\}_{(2b, 2c)}$. The vertex-figures of $\calP \mix \calPd$ are the dual of the facets,
	so they are equal to $\{3, 6\}_{(2b, 2c)}$. Thus, $\calP \mix \calPd$ is a properly self-dual chiral polytope of type $\{\{6, 3\}_{(2b, 2c)}, \{3, 6\}_{(2b, 2c)}\}$ with automorphism group $L_2(m) \times L_2(m)$.

\end{document}